\title{A decomposition formula for the Bartholdi zeta function of a hypergraph covering}
\author{Kosei Watanabe}
\address{Graduate School of Mathematics, Nagoya University, Chikusa-ku, Furo-cho, Nagoya, 464-8602,  Japan}
\email{watanabe.kosei.g8@s.mail.nagoya-u.ac.jp}
\subjclass[2020]{}
\date{November 30, 2025}
\newtheorem{thm}{Theorem}[section]
{\theoremstyle{definition}\newtheorem{Def}[thm]{Definition}}
\newtheorem{prop}[thm]{Proposition}
\newtheorem{lem}[thm]{Lemma}
\theoremstyle{definition}
\newtheorem{rem}[thm]{Remark}
\newtheorem{ex}[thm]{Example}
\numberwithin{equation}{section}
\newcommand{\NN}{{\mathbb{N}}}
\newcommand{\ZZ}{{\mathbb{Z}}}
\newcommand{\CC}{{\mathbb{C}}}
\begin{document}
\bibliographystyle{amsalpha+}
\maketitle


\begin{abstract}
It is shown by Mizuno and Sato that the Bartholdi zeta function
of a covering graph is decomposed as a product of 
 Bartholdi zeta functions of a base graph that are associated with representations.
In this paper, we extend their result to the case
 of a hypergraph covering.

\end{abstract}

\section{Introduction}
A $p$-adic analogue of the Selberg zeta function, 
which is now known as the 
Ihara zeta function, was introduced in \cite{Ihara}.
It is presented as an infinite product and is shown to be the reciprocal of a polynomial
(\cite{Ihara}).
Serre \cite{Serre1} pointed out
that this zeta function is regarded as that of regular graphs, 
and Bass \cite{Bass} extended this zeta function to the function of graphs. 
The Ihara zeta function of graph $G$ is given by

\begin{equation*}
    \zeta(G,t)=\prod_{[C]} \left(1-t^{|C|}\right)^{-1},
\end{equation*}
where the product runs over all equivalence classes $[C]$ of 
prime and reduced cycles $C$, and $|C|$ is the length of cycle $C$.

One line of research that has attracted a lot of interest is 
the investigation of decomposition formulae for graph coverings. 
A decomposition formula for the Ihara zeta function
 was firstly discussed in \cite{Stark}, where
 they showed the Ihara zeta function of 
 an unramified Galois covering of a graph is divisible by 
 that 
of the base graph.
Later, Mizuno and Sato extended the result of an unramified Galois covering to
a covering using ordinary voltage assignment in \cite{Mizuno00}.
Saito and Sato showed the decomposition formula 
for the Ihara zeta function of a 
hypergraph of a covering using ordinary voltage assignment in 
\cite{Sato13}.
On the other hand, 
Li and Hou gave a decomposition formula for the Ihara zeta function 
of a hypergraph of a permutation voltage
assignment in \cite{Li}.
Their result is as follows:
\begin{thm}[\cite{Li}]\label{thm11}
        We assume the following:
\begin{itemize}
    \item Let $H$ be a connected, finite hypergraph without loops, where
    each hypervertex belongs to at least two hyperedges (cf. Definition \ref{def:hypergraph}),
    \item Let $B_H$ be a bipartite graph associated with $H$, with
    $n= |V(B_H) | $ and $  m=|E(B_H)|$ (cf. Definition \ref{def:bipartite}),
    \item Let $\phi: E(R(B_H)) \to \mathcal{S}_k$ be a permutation voltage assignment of $B_H$ (cf. Definition \ref{def:pva}),
    \item Let $\Gamma = \langle \phi(e) \mid e \in R(B_H) \rangle$ be a subgroup of $\mathcal{S}_k$,
    \item Let $\Bar{H}$ be a $k$-fold covering of $H$ corresponding to $\phi$ and $\Gamma$ (cf. Definition \ref{def:hypercov}, Lemma \ref{lem36}),
    \item Let $\mathbb{P}$ be a permutation representation of $\Gamma$
    (cf. Definition \ref{def:rep}),
    \item Let $\rho_1=I, \ \rho_2,\dots,\rho_s$ be all the
    inequivalent irreducible representations of $\Gamma$,
    where $\rho_1$ denotes the trivial representation,
    \item Let $m_1,\dots,m_s$ be the multiplicities of irreducible representations
    $\rho_1,\dots,\rho_s$ in $\mathbb{P}$
    (cf. Lemma \ref{lem34}),
    \item Let $f_1,\dots,f_s$ be the degree of irreducible representations
    $\rho_1,\dots,\rho_s$.
\end{itemize}
Then the following decomposition formula holds:
        \begin{equation*}
            \zeta(\Bar{H},t)^{-1}=
            \zeta(H,t)^{-m_1}
            (1-t)^{(k-m_1)(m-n)}
            \prod_{i=2}^s M_i^{m_i},
        \end{equation*}
        where 
        \begin{equation*}
         M_i=\det \left(I_{f_in}-\sqrt{t}\sum_{g \in \Gamma}(\rho_i(g) \otimes A(B_H)_g)+
         tf_i \circ \left(D(B_H)-I_n\right)\right).
        \end{equation*}
    \end{thm}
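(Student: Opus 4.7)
The plan is to lift the Bass--Ihara determinant formula from $B_H$ to its $k$-fold cover $\bar{B}_H$ induced by $\phi$, and then to block-diagonalize the resulting determinant using the decomposition of the permutation representation $\mathbb{P}$ into irreducibles. The main structural idea is that the tensor-product form of the lifted adjacency matrix makes the determinant respect the isotypic decomposition of $\mathbb{P}$.

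First I would reduce the claim about hypergraphs to one about their bipartite incidence graphs. Under the standard correspondence, reduced prime cycles in $H$ correspond to reduced closed walks in $B_H$ of twice the length (each traversal of a hyperedge in $H$ is realised by a vertex--hyperedge--vertex path in $B_H$), so $\zeta(H,t)$ can be expressed in terms of $\zeta(B_H,\sqrt{t})$. The same reduction applies to $\bar{H}$ and $\bar{B}_H$, which lets me work throughout with the bipartite covering $\bar{B}_H \to B_H$. At the end of the argument, the factor $(1-t)^{(k-m_1)(m-n)}$ will emerge as the residue of the Euler-type prefactor in the Bass formula for $\bar{B}_H$, after the trivial-representation contribution has been absorbed into $\zeta(H,t)^{-m_1}$.

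I would then apply the Bass--Ihara formula to $\bar{B}_H$, writing
\[ \zeta(\bar{B}_H,\sqrt{t})^{-1} = (1-t)^{km-kn}\det\!\bigl(I_{kn} - \sqrt{t}\,A(\bar{B}_H) + t\,(D(\bar{B}_H) - I_{kn})\bigr). \]
The permutation voltage assignment $\phi$ yields the tensor structure
\[ A(\bar{B}_H) = \sum_{g \in \Gamma}\mathbb{P}(g) \otimes A(B_H)_g, \qquad D(\bar{B}_H) = I_k \otimes D(B_H), \]
where $A(B_H)_g$ is the sub-matrix of $A(B_H)$ collecting the arcs labelled $g$ by $\phi$. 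Since $\mathbb{P} \cong \bigoplus_{i=1}^{s} m_i\rho_i$, conjugation by the intertwiner that realises this isotypic decomposition simultaneously block-diagonalizes each $\mathbb{P}(g)\otimes A(B_H)_g$ and leaves $I_k \otimes (D(B_H)-I_n)$ unchanged up to a harmless rearrangement. This produces, for each $i$, exactly $m_i$ copies of the block
\[ I_{f_in} - \sqrt{t}\sum_{g\in\Gamma}\rho_i(g)\otimes A(B_H)_g + t\,I_{f_i}\otimes(D(B_H)-I_n), \]
whose determinant is $M_i$. For $i=1$ the trivial representation has $f_1=1$ and $\rho_1(g)\equiv 1$, so the inner sum collapses to $A(B_H)$ and this block recovers the bipartite Bass determinant of $B_H$ itself; its $m_1$-th power therefore yields $\zeta(H,t)^{-m_1}$ after reversing the bipartite reduction.

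The main obstacle I anticipate is the bookkeeping around the $\sqrt{t}$ substitution, the Euler-type prefactor, and the separation of the trivial block from the others. Concretely, one must verify that the prefactor $(1-t)^{km-kn}$ splits as $(1-t)^{m_1(m-n)}$ (absorbed into $\zeta(H,t)^{-m_1}$) times $(1-t)^{(k-m_1)(m-n)}$ (the claimed residual factor), and one must interpret the symbol $f_i \circ (D(B_H)-I_n)$ as $I_{f_i}\otimes(D(B_H)-I_n)$ while checking that the change of basis acts only on the first tensor factor, leaving the $A(B_H)_g$'s intact in each block. Once those identifications are pinned down, taking the product over $i$ assembles the decomposition and the formula follows.
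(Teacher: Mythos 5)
Your proposal is correct and follows essentially the same route the paper takes: it proves the Bartholdi generalization (Theorem \ref{thm41}) by combining the determinant expression of Theorem \ref{thm213} for $B_{\Bar H}$ with the tensor identity $A(B_{\Bar{H}})=\sum_{g}\mathbb{P}(g)\otimes A(B_H)_g$ of Lemma \ref{lem38}, conjugating by $M\otimes I_n$ from Lemma \ref{lem34} to split off $m_1$ trivial blocks and $m_i$ copies of each $\rho_i$-block, and then redistributing the prefactor exactly as you describe; Theorem \ref{thm11} is the $u=0$ specialization. Your identifications of $f_i\circ(D(B_H)-I_n)$ with $I_{f_i}\otimes(D(B_H)-I_n)$ and of the prefactor splitting $(1-t)^{k(m-n)}=(1-t)^{m_1(m-n)}(1-t)^{(k-m_1)(m-n)}$ match the paper's computation.
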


Similarly,
the Bartholdi zeta function of a graph $G$, introduced in \cite{Bartholdi},
is a function of two variables defined by

\begin{equation*}
    \zeta(G,u,t)=\prod_{[C]} \left(1-u^{\mathrm{cbc}(C)}t^{|C|}\right)^{-1},
\end{equation*}
where the product runs over all equivalence classes $[C]$ of prime cycles $C$,
$|C|$ is the length of cycle $C$,
and $\mathrm{cbc}(C)$ is the cyclic bump count of a cycle $C$.

Like the Ihara zeta function, a decomposition formula 
for the Bartholdi zeta function has attracted a lot of interest.
A decomposition formula for the Bartholdi zeta function 
was studied 
by Mizuno and Sato 
\cite{Mizuno}.
Saito and Sato presented the decomposition formula for both the Ihara zeta function and the 
Bartholdi zeta function of a 
hypergraph of a covering using ordinary voltage assignment in 
\cite{Sato13}.

In general, the permutation voltage assignment is not a regular covering (see \cite{NS22}), which significantly distinguishes it from the ordinary voltage assignment. This non-regularity implies that the assignment carries less structural information regarding the group action, and thus, it is non-trivial to determine whether the decomposition formula, known for ordinary voltage assignments, remain valid. 
Despite this challenge, we successfully present
 this decomposition formula,
extending the result of \cite{Li} to the Bartholdi zeta function 
for a hypergraph covering using a permutation voltage assignment.

\begin{thm}[Theorem \ref{thm53}]\label{thm12}
Assume the same conditions as in Theorem \ref{thm11}
and all $i$ with $m_i>0$, $\rho_i$ is a unitary representation.
Then the following decomposition formula holds:
        \begin{equation*}
            \zeta(\Bar{H},u,t)
            =\prod_{i=1}^s\zeta(H,\rho_i,\phi,u,t)^{m_i},
        \end{equation*}
        where $\zeta(H,\rho_i,\phi,t)$ is the
        Bartholdi $L$-function of the hypergraph $H$ (cf. Definition \ref{def:L-funct.}).
    \end{thm}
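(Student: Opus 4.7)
The plan is to reduce the identity to a determinant computation parallel to the one used in Theorem \ref{thm11} and then block-diagonalise the relevant matrix using the isotypic decomposition of $\mathbb{P}$. First, I would invoke a Bass--Hashimoto-style determinant formula for the hypergraph Bartholdi zeta function (expressed through the bipartite graph), so that $\zeta(\bar{H}, u, t)^{-1}$ is identified with the determinant of an explicit matrix built from $A(B_{\bar{H}})$, $D(B_{\bar{H}})$, and polynomial functions of $u$ and $t$. Symmetrically, I would read off from Definition \ref{def:L-funct.} the analogous determinant expression for $\zeta(H, \rho_i, \phi, u, t)^{-1}$, in which $\sum_{g\in\Gamma}\rho_i(g)\otimes A(B_H)_g$ plays the role of the adjacency matrix and $D(B_H)$ is inflated by a factor involving $f_i$.

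The second step is to exploit the tensor structure that the covering induces on the bipartite graph:
\begin{equation*}
A(B_{\bar{H}}) = \sum_{g\in\Gamma}\mathbb{P}(g)\otimes A(B_H)_g,
\qquad
D(B_{\bar{H}}) = I_k\otimes D(B_H).
\end{equation*}
Because $\mathbb{P}\cong\bigoplus_{i=1}^s\rho_i^{\oplus m_i}$ as representations of $\Gamma$ and each $\rho_i$ with $m_i>0$ is unitary by hypothesis, there exists a unitary $U$ on $\mathbb{C}^k$ intertwining $\mathbb{P}$ with $\bigoplus_i (I_{m_i}\otimes\rho_i)$. Conjugating the matrix inside the determinant for $\zeta(\bar{H}, u, t)^{-1}$ by $U\otimes I_n$ simultaneously block-diagonalises the adjacency and the degree contributions; since this conjugation preserves determinants, one obtains a product over $i$ of determinants, each occurring with multiplicity $m_i$, and each factor is precisely the determinant realising $\zeta(H, \rho_i, \phi, u, t)^{-1}$.

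The main obstacle is to ensure that every ingredient of the Bartholdi determinant formula -- the adjacency sum, the degree block, and the Bartholdi-specific correction terms involving the parameter $u$ -- decomposes compatibly under the single change of basis $U\otimes I_n$. The adjacency part is precisely where the unitarity hypothesis becomes indispensable: for non-unitary $\rho_i$ one would only obtain a similarity by some invertible (not unitary) intertwiner whose determinant would distort the exponents $m_i$. The degree block is automatically compatible because $I_k\otimes D(B_H)$ already commutes with $U\otimes I_n$, and the $u$-dependent correction terms, being polynomial in $D(B_H)$ and scalars, inherit this compatibility. Assembling the factorisation and comparing term-by-term with the Bartholdi $L$-function determinants yields the asserted product formula.
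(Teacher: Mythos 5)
Your outline reproduces the easy half of the argument but assumes away the hard half. The block-diagonalisation step is indeed how the paper proceeds (Theorem \ref{thm41}): one writes $\zeta(\Bar{H},u,t)^{-1}$ via the determinant formula of Theorem \ref{thm213}, substitutes $A(B_{\Bar{H}})=\sum_{g}\mathbb{P}(g)\otimes A(B_H)_g$ and $D(B_{\Bar{H}})=I_k\otimes D(B_H)$, and conjugates by $M\otimes I_n$ where $M$ intertwines $\mathbb{P}$ with $\bigoplus_i m_i\circ\rho_i$. But this only yields a product of determinants $M_i$ of the form $\det\bigl(I_{f_in}-\sqrt{t}\sum_g\rho_i(g)\otimes A(B_H)_g+(1-u)tf_i\circ(D(B_H)-(1-u)I_n)\bigr)$. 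The genuine gap is your claim that the matching determinant expression for $\zeta(H,\rho_i,\phi,u,t)^{-1}$ can be ``read off from Definition \ref{def:L-funct.}''. It cannot: the $L$-function is \emph{defined} as an Euler product over equivalence classes of prime cycles, and identifying that Euler product with a determinant in the vertex variables is the core technical content of the paper. It requires first a Hashimoto-type expression $\zeta(B_H,\rho,\phi,u,t)^{-1}=\det(I-t(B+uJ))$ over the directed-edge space, proved via Amitsur's theorem and a Lyndon-word parametrisation of prime cycles (Lemmas \ref{lem:45}--\ref{lem:lynpri}, Proposition \ref{Prop:hashimoto}), and then a Bass-type passage from the edge determinant to the vertex determinant via the block identity $\det(\mathbb{X}\mathbb{Y})=\det(\mathbb{Y}\mathbb{X})$ with auxiliary matrices $K$, $L$ (Proposition \ref{prop54}). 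None of this appears in your proposal.

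You have also misplaced the role of unitarity. Conjugation by \emph{any} invertible intertwiner preserves determinants, so the block-diagonalisation of the adjacency sum works for arbitrary representations --- indeed Theorem \ref{thm41} carries no unitarity hypothesis, and insisting on a unitary $U$ there buys nothing. Where unitarity is actually indispensable is in the Bass-type step you skipped: one needs ${}^t\overline{K}K=D(B_H)\otimes I_l$ and $K\,{}^t\overline{K}=BJ+I_{2ml}$ (Lemmas \ref{lem:49} and \ref{lem:410}), and these identities use ${}^t\overline{\rho(\phi(e))}\rho(\phi(e))=I_l$ and $\rho(\phi(e^{-1}))={}^t\overline{\rho(\phi(e))}$, which fail for non-unitary $\rho$. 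Finally, your sketch omits the bookkeeping of the factors $(1-(1-u)^2t)^{(k-m_1)(m-n)}$ versus $(1-(1-u)^2t)^{-(m-n)f_i}$, which cancel only because $k=\sum_i m_if_i$; this is a small but necessary step in assembling the final product formula.
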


In $\S2$, we prepare several basic  definitions such as hypergraphs, Bartholdi zeta functions,
and related topics.
In $\S3$, we set up some notation of hypergraph covering to state and prove our main theorem in $\S4$.
For the reader’s convenience, we provide complete proofs of all
statements required in this paper so that it is as self-contained as
possible.

\section{Hypergraphs}

In this section, we introduce basic definitions and notation 
related to the zeta functions of hypergraphs.

We regard a (unoriented) graph $G$ as a pair $(V(G),E(G))$ of sets where
the set $V(G)$ is the set of vertices of graph $G$ and 
the set $E(G)$ is the set of unoriented edges of graph $G$.

\begin{Def}
    A \textit{finite} and \textit{simple} graph $G$ 
    is defined as a pair of finite sets $(V(G),E(G))$ where
the set $E(G)$ is a set of unordered pairs of distinct vertices in $V(G)$.
\end{Def}

\begin{Def}
    Let $G$ be a graph. A vertex $v \in V(G)$ is 
    \textit{adjacent} to
    a vertex $u \in V(G)$ if there is an edge $e \in E(G)$ such that 
    $e=\{u,v\}$.
\end{Def}

\begin{rem}
    An edge of its cardinality $1$ is called a loop.
\end{rem}

We follow the definition of a hypergraph as given in \cite{Storm}.

\begin{Def}\label{def:hypergraph}
 A \textit{hypergraph} $H=(V(H),E(H))$ is 
 a pair of a set $V(H)$ of \textit{hypervertices} and 
 a set $E(H)$ of \textit{hyperedges}, such that the union of all hyperedges is $V(H)$.
 The set $E(H)$ is a collection of non-empty subsets of $V(H)$.
 Here, the hypergraph is finite.
 A hypervertex $v$ is \textit{incident} to a hyperedge $e$ if $v \in e$.    
\end{Def}

\begin{rem}
Let $H$ be a hypergraph.
    If any element of $E(H)$ has the cardinality two, then the hypergraph $H$ is 
    especially called the finite and simple graph.
\end{rem}

We provide the definition of a path in hypergraphs.

\begin{Def}
Let $H$ be a hypergraph.
\begin{itemize}
    \item  A \textit{path} $P$ of length $n$ in $H$ is a sequence 
 $P=(v_1,e_1,v_2,e_2,\dots,e_n,v_{n+1})$
 of $n+1$ hypervertices and $n$ hyperedges such that $v_i \in V(H), \ e_j \in E(H), $ and $ v_i,v_{i+1} \in e_i$ for $i=1,\dots,n$.
 We set $|P|=n$. The path $P$ is called a \textit{closed path} or \textit{cycle} if $v_1=v_{n+1}$.  
 \item The hypergraph $H$ is called \textit{connected}
 if there is a path between any two hypervertices.
\end{itemize}
  
\end{Def}

\begin{Def}
Let $H$ be a hypergraph.
 A path $P=(v_1,e_1,v_2,e_2,\dots,e_n,v_{n+1})$ in $H$
 has a \textit{backtracking} (a.k.a. \textit{bump}) at $e$ or $v$ if 
 it contains a 
 subsequence of $P$ of the form $(e,v,e)$ or $(v,e,v)$,
 where $e \in E(H), \ v \in V(H)$.
 Furthermore, the \textit{cyclic bump count} $\mathrm{cbc}(C)$ of a cycle 
 $C=(v_1,e_1,v_2,e_2,\dots,e_n,v_1)$ is the sum of the following:
 \begin{equation*}
            \mathrm{cbc}(C)=| \ \{ i=1,\dots,n  \mid v_i=v_{i+1} \} \ |
            + | \ \{ i=1,\dots,n \mid e_i=e_{i+1} \} \ |,
 \end{equation*}
 where $v_{n+1}=v_1$ and $e_{n+1}=e_1$.    
\end{Def}

 We introduce an \textit{equivalence relation} between cycles.

 \begin{Def}
 Let $H$ be a hypergraph.
 \begin{enumerate}[(1)]
\item Two cycles $C_1=(v_1,e_1,v_2,\dots,e_m,v_1)$ and 
 $C_2=(w_1,f_1,w_2,\dots,f_m,w_1)$ in $H$ are called \textit{equivalent} if 
 there exists some $k$ such that 
 $w_j=v_{j+k \pmod m}, f_j=e_{j+k \pmod m }$ for all $j$.  
 We denote by $[C]$ the \textit{equivalence class} which contains the cycle $C$.
 \item Let $B^r \ ( r \ge 2)$ be the cycle obtained by going $r$ times around 
 the cycle $B$.
 Such a cycle is called a \textit{multiple} of $B$.  
 \item A cycle $C$ is \textit{prime} if it is not a multiple of any strictly 
smaller cycle.
 \item A cycle $C$ is \textit{reduced} if $\mathrm{cbc} (C)=0$.
 \end{enumerate}
 \end{Def}

We define the zeta function of the graph.

\begin{Def}[\cite{Storm}] Let $H$ be a finite and connected hypergraph such that
 every hypervertex is in at least two hyperedges.
 The \textit{Ihara zeta function} of the hypergraph $H$ is defined by
 \begin{equation*}
     \zeta(H,t)=\prod_{[C]} \left(1-t^{|C|}\right)^{-1},
 \end{equation*}
 where $[C]$ runs over all equivalence classes of prime and reduced cycles of $H$,
 and $t \in \CC$ with $|t|$ sufficiently small for this function to
  converge.    
\end{Def}

\begin{Def}[\cite{Sato}] Let $H$ be a finite and connected hypergraph.
 The \textit{Bartholdi zeta function} of $H$ is defined by
 \begin{equation*}
    \zeta(H,u,t)=\prod_{[C]}\left(1-u^{\mathrm{cbc}(C)}t^{|C|}\right)^{-1},
 \end{equation*}
 where $[C]$ runs over all equivalence classes of prime cycles of $H$,
  and $u,t \in \CC$ with $|u|,  |t|$ sufficiently small.    
\end{Def}

We define the bipartite graph $B_H$ associated with a hypergraph $H$,
which is essential for the determinant expression of the zeta function 
given in Theorem \ref{thm213}.

\begin{Def}\label{def:bipartite}
A \textit{bipartite graph} $B_H$ associated with a hypergraph $H$ is the 
    (unoriented) graph defined as follows:
 $V(B_H)=V(H) \cup E(H)$ and $E(B_H)=\{ \{v,e\} \mid v \in V(H), e \in E(H), v \in e \}$. 
\end{Def} 

\begin{rem}\label{rem:bipar}
The bipartite graph $B_H$ is a simple graph because 
there is at most one edge 
between any hypervertex $v$ and 
any hyperedge $e$.
The bipartite graph $B_H$ is also a finite graph because
$H$ is a finite hypergraph.
This implies that 
$\#(V(H) \cup E(H)) = \#V(H)+\#E(H)<\infty$ and $\#E(B_H) < (\#V(H)) \times (\#E(H)) < \infty$.

\end{rem}

We prepare for the definition of a symmetric digraph.

\begin{Def}
Let $G$ be a simple graph.
Its \textit{symmetric digraph} $R(G)$
is a directed graph whose vertex set $V(R(G))$ is $V(G)$
and whose edge set $E(R(G))$ is $\left\{ (u,v),(v,u) \mid \{u,v\} \in E(G) \right\}$.
We call each element $e=(u,v) \in E(R(G))$ a \textit{directed edge} from the 
\textit{origin} vertex $u$ to the \textit{terminus} vertex $v$,
denoted by $o(e)=u$ and $t(e)=v$, respectively.
The \textit{inverse} of $e=(u,v)$ is the edge $e^{-1}=(v,u)$. 
\end{Def}

\begin{rem}\label{rem:24}
    In this paper, we assume that all graphs are finite.
    Let $r=\#E(G)$.
    For simplicity, we label the edges of $R(G)$ as
    $E(R(G))=\{e_1,e_2,\dots,e_r,e_{r+1},\dots,e_{2r} \}$, where
    $e_{i+r}=e_i^{-1}$ for $1 \le i \le r$.
\end{rem}

We now prepare some notation for paths and cycles in a symmetric
 digraph, which will be mainly utilized in Section $4$.

\begin{Def}\label{def:dipath}
    Let $R(G)$ be a symmetric digraph of finite,simple, and connected graph $G$.
    \begin{enumerate}[(1)]
        \item  A \textit{path} $P$ of length $n$ in $R(G)$ is a sequence 
        of $n$ directed edges 
 $P=(e_1,e_2,\dots,e_n)$
  such that 
 $t(e_i)=o(e_{i+1})$ for all $i \in \{1,\dots,n-1\}$.
 We set $|P|=n$. The path $P$ is called a \textit{closed path} or \textit{cycle} if $o(e_1)=t(e_{n})$.  
 \item Two cycles $C_1=(e_1,e_2,\dots,e_n)$ and 
 $C_2=(f_1,f_2,\dots,f_n)$ in $R(G)$ are called \textit{equivalent} if 
 there exists some $k$ such that 
 $f_i=e_{i+k \pmod n}$ for all $i$.  
 We denote by $[C]$ the \textit{equivalence class} which contains the cycle $C$.
 \item Let $B^r \ ( r \ge 2)$ be the cycle obtained by going $r$ times around 
 the cycle $B$.
 Such a cycle is called a \textit{multiple} of $B$.  
 \item A cycle $C$ is \textit{prime} if it is not a multiple of any strictly 
smaller cycle.
 \item A cycle $C=(e_1,\dots,e_n)$ is \textit{reduced} if $e_i \neq e_{i+1}^{-1}$ 
 for all $i \in \{1,\dots,n\}$.
    \end{enumerate}
\end{Def}

\begin{rem}\label{rem:corres}
For any graph $G$, there is a canonical bijection 
between the set of paths in $G$ and the set of paths in $R(G)$. This correspondence extends to cycles, prime cycles, and
reduced cycles.
\end{rem}

\begin{rem}

    Remark \ref{rem:corres} is used to establish the Hashimoto expression of zeta 
    functions of graphs. The generalization of this expression is given in 
    Proposition \ref{Prop:hashimoto}.
    Combining this with the correspondence of hypergraph and its bipartite graph, 
    we obtain the Hashimoto expression of zeta functions of a hypergraph.
\end{rem}

We give basic definitions from graph theory.

\begin{Def}\label{def:2.16}
Let the graph $G$ be finite and simple.
    \begin{itemize}
        \item Let $\#V(G)=n$. We define 
        the matrix $A(G) \in M_n(\ZZ)$
        whose $(i,j)$-th entry is $1$ if  
        the vertex $v_i$ is adjacent to the vertex $v_j$,
        and $0$ otherwise.
        Such a matrix is called the \textit{adjacency matrix} of the graph $G$.
        \item We define the \textit{degree} of the vertex $v_i$ 
        as the number of 
        the edges incident to vertex $v_i$.
        We denote the 
        degree of the vertex $v_i$ by $\deg (v_i)$.
        \item Let $\#V(G)=n$.
        We define the diagonal matrix $D(G) \in M_n(\ZZ)$
        whose $(i,i)$-th entry is $\deg (v_i)$ for all $i \in \{1,\dots,n\}$.
        Such a matrix is called the \textit{degree matrix} of the graph $G$.
    \end{itemize}
\end{Def}

A determinant expression of a Bartholdi zeta function of a hypergraph 
is given as follows.
\begin{thm}[\cite{Sato}]\label{thm213}
    Let $H$ be a finite and connected hypergraph such that every hypervertex is in at 
    least two hyperedges. Then
    \begin{equation*}
        \zeta(H,u,t)=\zeta(B_H,u,\sqrt{t})
        =(1-(1-u)^2t)^{-(m-n)}
        \det(I_n-\sqrt{t}A(B_H)+(1-u)t(D(B_H)-(1-u)I_n))^{-1},
    \end{equation*}
    where $n= |V(B_H) |$ and $ \ m=|E(B_H)|$.
\end{thm}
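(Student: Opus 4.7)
The plan is to reduce the statement to two separate ingredients: a cycle-by-cycle identification $\zeta(H,u,t)=\zeta(B_H,u,\sqrt{t})$, and the classical Bartholdi determinant expression for the zeta function of an ordinary graph, applied to $B_H$.

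First I would set up the correspondence between cycles in $H$ and cycles in $B_H$. A hypergraph cycle $C=(v_1,e_1,v_2,\dots,e_n,v_1)$ is literally the same sequence as a closed walk in $B_H$ starting at $v_1$, of length $2n$, since every incidence $v_i \in e_i$ is by definition an edge of $B_H$. Conversely, because $B_H$ is bipartite every cycle has even length, and its equivalence class contains a representative whose starting vertex lies in $V(H)\subset V(B_H)$; such a representative is a hypergraph cycle. I would then verify that this correspondence preserves the equivalence relation and primality, and that the bookkeeping for lengths and bumps transforms as follows: if $\tilde C$ denotes the bipartite image of $C$, then $|\tilde C|=2|C|$, so $(\sqrt{t})^{|\tilde C|}=t^{|C|}$, and $\mathrm{cbc}(\tilde C)=\mathrm{cbc}(C)$, because a backtracking $(e,v,e)$ or $(v,e,v)$ in the hypergraph cycle is precisely a backtracking at a single vertex of $B_H$. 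Matching Euler factors term by term then yields $\zeta(H,u,t)=\zeta(B_H,u,\sqrt{t})$.

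Second, I would invoke the Bartholdi determinant expression for an ordinary finite graph $G$ with $n$ vertices and $m$ edges,
\begin{equation*}
\zeta(G,u,s)^{-1}=(1-(1-u)^2 s^2)^{m-n}\det\bigl(I_n-s\,A(G)+(1-u)s^2(D(G)-(1-u)I_n)\bigr),
\end{equation*}
due originally to Bartholdi and recorded in the Mizuno--Sato line of work cited in the introduction. Substituting $G=B_H$ and $s=\sqrt{t}$ gives exactly the displayed formula. The standard proof of this identity passes through the Hashimoto edge matrix $W_1$ of the symmetric digraph $R(G)$: one computes $\zeta(G,u,s)^{-1}=\det(I_{2m}-sW_1+(u-1)sJ)$, where $J$ is the inverse-swap involution on directed edges, and then reduces this $2m\times 2m$ determinant to the stated $n\times n$ determinant via a block-matrix factorization that produces the excess factor $(1-(1-u)^2 s^2)^{m-n}$. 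Since this identity is classical I would cite it rather than reprove it.

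The main obstacle I anticipate lies in the first step: specifically, the verification that $C\mapsto \tilde C$ descends to a well-defined bijection on equivalence classes of \emph{prime} cycles. One must show that two inequivalent hypergraph cycles never yield equivalent bipartite cycles, that a prime hypergraph cycle lifts to a prime bipartite cycle (and vice versa), and that every equivalence class of cycles in $B_H$ admits a representative starting at an element of $V(H)$. Once this purely combinatorial matching is established, the stated determinant formula is obtained by a direct application of the Bartholdi identity.
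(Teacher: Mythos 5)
Your proposal is correct and follows the standard route: the paper itself does not prove Theorem \ref{thm213} but imports it from \cite{Sato}, and the argument given there is exactly your two-step reduction --- the length-doubling, bump-preserving bijection between equivalence classes of prime cycles of $H$ and of $B_H$, followed by Bartholdi's determinant formula for the ordinary graph $B_H$ evaluated at $s=\sqrt{t}$. The one point you rightly flag, namely that equivalent cycles of $B_H$ differ by an even cyclic shift so that the correspondence descends to equivalence classes and preserves primality, is settled immediately by bipartiteness, so there is no gap.
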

The Ihara zeta function is recovered when $u=0$.

\section{Hypergraph coverings over a finite hypergraph}

In this section, we discuss coverings of hypergraphs.

We begin with the definition of a covering graph.

\begin{Def}\label{def:covering}
Let $G$ be a simple graph.
     \begin{itemize}
         \item      We denote by $N(v)$  
     the \textit{neighborhood} of vertex $v$, 
     which is the set 
     $\{ u \in V(G) \mid \{u,v\} \in E(G) \}$
     \item   A graph $\Tilde{G}$ is called a \textit{graph covering} of $G$ with projection
$\pi: \Tilde{G} \to G$ if there is a surjective map
$\pi:V(\Tilde{G}) \to V(G)$ such that $\left.\pi\right|_{N(v')} :N(v') \to N(v)$
 is a bijection for all vertices $v \in V(G)$ and $v' \in \pi^{-1}(v)$.
 \item 
 A covering graph $\Tilde{G}$ is called a 
 \textit{k-fold covering} if $\pi$ is $k$-to-one.
 
     \end{itemize}
\end{Def}

We prepare for the definition of a hypergraph covering.
\begin{Def}\label{def:hypercov}
    Let $H$ be a hypergraph.
     \begin{itemize}
         \item      We denote by $N(v)$  
     the \textit{neighborhood} of vertex $v$, 
     which is the set 
     $\{ u \in V(H) \mid \{u,v\} \subset e \in E(H) \}$
     \item   A hypergraph $\Bar{H}$ is called a \textit{hypergraph covering} of $H$ with projection
$\pi: \Bar{H} \to H$ if there is a surjective map
$\pi:V(\bar{H}) \to V(H)$ such that $\left.\pi\right|_{N(\Bar{v})} :N(\Bar{v}) \to N(v)$
 is a bijection for all vertices $v \in V(H)$ and $\Bar{v} \in \pi^{-1}(v)$.
 \item 
 A covering hypergraph $\Bar{H}$ is called a 
 \textit{k-fold covering} if $\pi$ is $k$-to-one.
     \end{itemize}
\end{Def}

We construct a $k$-fold graph using a graph and a group.

\begin{Def}[\cite{Li}]\label{def:pva}
Let $G$ be a finite and simple graph.
Let $\mathcal{S}_k$ denote the symmetric group acting on the set $[k]=\{1,2,\dots,k\}$.
A map $\phi : E(R(G)) \to \mathcal{S}_k$ is called a
\textit{permutation voltage assignment} on $G$ if
 $\phi(e^{-1}) = \phi(e)^{-1}$ for all $e \in E(R(G))$.
 The pair $(G,\phi)$ is called a \textit{permutation voltage graph}.
\end{Def}

We define the derived graph, which is the graph associated with a permutation voltage assignment.
\begin{Def}[\cite{Li}]
 The \textit{derived graph} $G^{\phi}$ of the permutation voltage graph $(G,\phi)$
 is defined by:
 $V(G^{\phi})=V(G) \times [k]=\{ (v,i) \mid v \in V(G) , \ i \in [k]  \}$ and 
 $((u,i),(v,j)) \in E(R(G^{\phi}))$ if and only if there exists an edge $e=(u,v) \in E(R(G))$ such that  $i=\phi(e)j$.    
\end{Def}

We prepare the notation on representation of a subset of $\mathcal{S}_k$.

\begin{Def}\label{def:rep}
Let $\mathcal{S}_k$ denote the symmetric group acting on the set $[k]=\{1,2,\dots,k\}$.
Let $\mathcal{H}$ be a subgroup of $\mathcal{S}_k$.
\begin{itemize}
    \item 
A map $\rho : \mathcal{H} \to GL(k, \mathbb{C})$ is called a \textit{representation} of $\mathcal{H}$ if it is a group homomorphism.
\item Let $\mathbb{P}$ be the (left) permutation representation of 
$\mathcal{H}$ of degree $k$.
For each $g \in \mathcal{H}$, the $(i,j)$-th entry $p_{ij}^{(g)}$ of $\mathbb{P}(g) $ is defined as 
    \begin{equation*}
        p_{ij}^{(g)}=\delta_{i,g(j)},
    \end{equation*}
where $\delta$ is the \textit{Kronecker delta}.
\end{itemize}

\end{Def}

For square matrices $A_1,A_2,\dots,A_n$, let
\begin{equation*}
    A_1 \oplus \cdots \oplus A_n=\oplus_{i=1}^n A_i
    =\mathrm{diag} (A_1,A_2,\dots,A_n)
\end{equation*}
For simplicity, we let $n \circ A$ denote $\oplus_{i=1}^n A$.

The following lemma is from \cite{Serre}.

\begin{lem}[\cite{Serre}]\label{lem34}
Let $\mathcal{H}$ be a subgroup of $\mathcal{S}_k$. 
    Let $\rho_1=I_1,\rho_2,\dots,\rho_s$ be all the
    \textit{inequivalent irreducible representations} of $\mathcal{H}$.
    Let $f_i$ be the \textit{degree} of $\rho_i$ and 
    $m_i$ be the \textit{multiplicity} of $\rho_i$ in the 
    permutation representation $\mathbb{P}$.
    Then there exists an invertible matrix $M$ such that 
    for all $ g \in \mathcal{H}$, 
    \begin{equation*}
        M^{-1}\mathbb{P}(g) M=m_1 \circ I_1 \oplus m_2 \circ \rho_2(g)
        \oplus \cdots \oplus m_s \circ \rho_s(g).
    \end{equation*}
\end{lem}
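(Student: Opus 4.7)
The plan is to invoke Maschke's theorem together with the standard identification of multiplicities via characters; the statement is essentially a concrete matrix form of complete reducibility for the finite group $\mathcal{H}$, so no genuinely new ideas are needed beyond carefully constructing the change-of-basis matrix $M$.

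First I would note that $\mathcal{H}$, as a subgroup of the finite group $\mathcal{S}_k$, is finite, so every finite-dimensional complex representation is completely reducible by Maschke's theorem. Applying this to the permutation representation $\mathbb{P}$ of degree $k$, one obtains an internal direct sum decomposition of $\mathbb{C}^k$ into $\mathcal{H}$-invariant subspaces, each carrying an irreducible subrepresentation isomorphic to one of the $\rho_i$. Grouping isomorphic summands, one writes $\mathbb{P} \cong \bigoplus_{i=1}^{s} m_i \rho_i$ as abstract representations, where $m_i \in \mathbb{Z}_{\ge 0}$ is the number of summands isomorphic to $\rho_i$. The value of $m_i$ is independent of the chosen decomposition by Schur's lemma; concretely, $m_i = \langle \chi_{\mathbb{P}}, \chi_{\rho_i} \rangle_{\mathcal{H}} = \dim_{\mathbb{C}} \mathrm{Hom}_{\mathcal{H}}(\rho_i, \mathbb{P})$, which matches the definition of multiplicity stated in the lemma.

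Next I would build the matrix $M$ explicitly. For each $i$, choose an ordered basis of a subspace of $\mathbb{C}^k$ realizing the $m_i$-fold copy of $\rho_i$: pick $m_i$ $\mathcal{H}$-invariant subspaces $W_{i,1},\dots,W_{i,m_i}$ on which $\mathcal{H}$ acts by a representation equivalent to $\rho_i$, and in each $W_{i,j}$ choose a basis with respect to which the action is given exactly by the matrices $\rho_i(g)$ (this is possible because the restriction is equivalent, not merely similar, to $\rho_i$). Concatenating these bases, in the order $(i=1, j=1,\dots,m_1), (i=2, j=1,\dots,m_2), \dots, (i=s, j=1,\dots,m_s)$, yields an ordered basis of $\mathbb{C}^k$; let $M$ be the matrix whose columns are these basis vectors expressed in the standard basis. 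Since this is a basis, $M$ is invertible, and by construction the matrix of $\mathbb{P}(g)$ in the new basis is block diagonal with $m_i$ consecutive copies of $\rho_i(g)$ for each $i$. This gives exactly
\begin{equation*}
    M^{-1}\mathbb{P}(g) M = m_1 \circ I_1 \oplus m_2 \circ \rho_2(g) \oplus \cdots \oplus m_s \circ \rho_s(g),
\end{equation*}
uniformly in $g \in \mathcal{H}$, using the convention that $\rho_1 = I_1$ is the trivial representation.

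The main obstacle, if any, is purely notational rather than mathematical: one must ensure that each copy of $\rho_i$ inside $\mathbb{P}$ is presented in the specific matrix form $\rho_i(g)$ rather than merely as a conjugate of it. This is handled by choosing, inside each isotypic component, an explicit isomorphism to $\rho_i$ and transporting its standard basis, which is possible precisely because the isomorphism class of each irreducible factor is represented by the fixed matrix representation $\rho_i$. Everything else, including the formula $m_i = \langle \chi_{\mathbb{P}}, \chi_{\rho_i} \rangle$ and the uniqueness of the multiplicities, is a direct citation of standard material from \cite{Serre}.
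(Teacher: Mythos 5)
Your proof is correct. The paper does not actually prove this lemma---it is stated as a citation of Serre's \emph{Linear representations of finite groups}---so there is no in-paper argument to compare against; your route via Maschke's theorem, the character-theoretic identification of the multiplicities $m_i$, and the explicit transport of the standard basis of each $\rho_i$ into the invariant subspaces $W_{i,j}$ to build the columns of $M$ is exactly the standard proof of this fact, and you correctly isolate the one point that needs care (arranging that each irreducible summand appears in the literal matrix form $\rho_i(g)$ rather than a conjugate of it).
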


\begin{rem}[\cite{Serre}]
 The multiplicity of the trivial representation $m_1$ is bigger than $0$.
\end{rem}

\begin{lem}[{\cite[Theorem 7]{Li}}] \label{lem36}
    Let $\Bar{H}$ be any $k$-fold hypergraph covering of the hypergraph $H$.
    Then there exists a unique permutation voltage assignment $\phi:E(R(B_H)) \to \mathcal{S}_k$
    such that the hypergraph
    $H^{B_H^{\phi}}$, which is  
    associated with $B_H^{\phi}$(the derived bipartite graph),
    is isomorphic to $\Bar{H}$.
\end{lem}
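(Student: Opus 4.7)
The plan is to reduce the lemma for hypergraph coverings to the corresponding (classical) statement for graph coverings, by passing through the associated bipartite graph $B_H$.

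First I would establish that the $k$-fold hypergraph covering $\pi:\bar H\to H$ induces a $k$-fold graph covering $\tilde\pi:B_{\bar H}\to B_H$. The vertex component is clear: we set $\tilde\pi(\bar v)=\pi(\bar v)$ for $\bar v\in V(\bar H)$, and we must define $\tilde\pi$ on $E(\bar H)\subset V(B_{\bar H})$. For a hyperedge $\bar e\in E(\bar H)$, pick any $\bar v\in\bar e$; since $\pi$ restricted to $N(\bar v)$ is a bijection with $N(\pi(\bar v))$, the image $\pi(\bar e\setminus\{\bar v\})\cup\{\pi(\bar v)\}$ lies inside some hyperedge of $H$, and a careful neighborhood argument forces this image to equal a unique $e\in E(H)$ independent of the chosen $\bar v$; set $\tilde\pi(\bar e)=e$. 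Edges of $B_{\bar H}$ are the incidence pairs $\{\bar v,\bar e\}$, and the covering property of $\pi$ translates directly to showing $\tilde\pi$ is $k$-to-one on vertices and a bijection between neighborhoods in $B_{\bar H}$ and $B_H$, i.e.\ that $B_{\bar H}$ is a $k$-fold graph covering of $B_H$ in the sense of Definition \ref{def:covering}.

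Once that bipartite cover is in hand, I would invoke the classical Gross--Tucker construction of a permutation voltage assignment from a graph cover: label the fiber $\tilde\pi^{-1}(x)$ by $[k]$ for every $x\in V(B_H)$; then for each directed edge $e=(x,y)\in E(R(B_H))$ and each $j\in[k]$, the covering property provides a unique lift of $e$ starting at the $j$-th element of $\tilde\pi^{-1}(x)$ and ending at some $i$-th element of $\tilde\pi^{-1}(y)$. Defining $\phi(e)(j)=i$ gives a map $\phi:E(R(B_H))\to\mathcal S_k$; the relation $\phi(e^{-1})=\phi(e)^{-1}$ follows because the lift of $e^{-1}$ is the reverse of the lift of $e$, so $\phi$ is a permutation voltage assignment. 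By construction $B_H^\phi\cong B_{\bar H}$ as bipartite graphs, and unwinding the bijective correspondence between hypergraphs and their bipartite graphs (Definition \ref{def:bipartite}) shows that the hypergraph $H^{B_H^\phi}$ reconstructed from $B_H^\phi$ is isomorphic to $\bar H$.

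For uniqueness, I would argue that any $\phi':E(R(B_H))\to\mathcal S_k$ with $H^{B_H^{\phi'}}\cong\bar H$ must induce the same lifting assignment on fibers once the labeling of $\tilde\pi^{-1}(x)$ is fixed, because each directed edge in $R(B_H)$ has only one lift starting at a prescribed labeled vertex of the cover; any two voltage assignments producing isomorphic derived graphs over the same labeled fibers must therefore agree edge-by-edge. The main obstacle I anticipate is Step 1, specifically making the induced hyperedge map $\tilde\pi:E(\bar H)\to E(H)$ well-defined and verifying it is $k$-to-one: the definition of hypergraph covering in Definition \ref{def:hypercov} only prescribes the behavior of $\pi$ on vertices via local bijections of neighborhoods, so the compatibility with hyperedges must be extracted from that local bijectivity together with the hypothesis (used in Theorem \ref{thm11}) that every hypervertex lies in at least two hyperedges, which is what prevents pathological collapses of hyperedges in the cover.
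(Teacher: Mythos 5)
The paper does not prove this lemma at all; it is quoted from \cite[Theorem 7]{Li}, so there is no in-paper argument to compare against. Your route --- pass to the associated bipartite graph, apply the Gross--Tucker correspondence between $k$-fold graph coverings and permutation voltage assignments, then translate back through Definition \ref{def:bipartite} --- is the standard one and is surely how Li and Hou argue. But your Step 1 contains a genuine gap rather than a deferred technicality. Definition \ref{def:hypercov} only requires that $\pi$ restrict to a bijection $N(\bar v)\to N(v)$ of vertex neighborhoods, and in a hypergraph the neighborhood $N(v)$ does not remember which hyperedge witnesses each adjacency: two distinct hyperedges through $v$ can contribute exactly the same neighbors. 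Consequently the local bijections on $N(\bar v)$ do not by themselves yield a well-defined map $E(\bar H)\to E(H)$, nor the local bijection between the hyperedges incident to $\bar v$ and those incident to $v$ that you need for $\tilde\pi:B_{\bar H}\to B_H$ to be a covering in the sense of Definition \ref{def:covering}. The hypothesis that every hypervertex lies in at least two hyperedges does not repair this; it only controls degrees in $B_H$. To close the gap one must either take the bipartite formulation as the \emph{definition} of hypergraph covering (which is effectively what \cite{Li} does) or strengthen Definition \ref{def:hypercov} so that $\pi$ comes with a compatible map on hyperedges; ``a careful neighborhood argument'' alone will not extract it.

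Separately, the uniqueness you obtain is only uniqueness relative to a chosen labeling of each fiber $\tilde\pi^{-1}(x)$ by $[k]$. Relabeling the fibers replaces $\phi(e)$ by $\sigma_{t(e)}\,\phi(e)\,\sigma_{o(e)}^{-1}$ and produces a different permutation voltage assignment whose derived graph is still isomorphic to $B_{\bar H}$, so $\phi$ is literally unique only when $k=1$. The statement must be read, and proved, as uniqueness up to this equivalence of voltage assignments; your argument silently fixes the labelings before claiming edge-by-edge agreement of an arbitrary $\phi'$, which is exactly where the apparent strict uniqueness comes from. This defect is inherited from the statement itself, but a complete proof should make the dependence on the fiber labeling explicit.
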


The matrix 
$A(B_H)_g$ is defined as follows:

\begin{Def}\label{def:38}
Let $H$ be a finite and connected hypergraph.
Let a map $\phi:E(R(B_H)) \to \mathcal{S}_k$ be a permutation voltage assignment on $B_H$,
     and let $n=\# V(H), \ m=\#E(H)$, where $n+m=\#V(B_H)$.
Let $\Gamma $ be a group that is generated by $ \mathrm{Im} \ \phi$.
For any $g \in \Gamma$, the $(n+m) \times (n+m)$ matrix 
$A(B_H)_g=(a_{ij}^{(g)})$ is defined by
    \begin{equation*}
    a_{ij}^{(g)}=
        \begin{cases}
            1 &  \text{if} \  e=(v_i,v_j) \in E(R(B_H)),  \text{where} \ 
            v_i,v_j \in V(R(B_H))\ 
            \text{and}\  \phi(e)=g,\\
            0 & \text{otherwise.}
        \end{cases}
    \end{equation*}
\end{Def}

    The following equality holds:
    \begin{equation*}
        A(B_H)=\sum_{g \in \Gamma}A(B_H)_g,
    \end{equation*}
    where $A(B_H)$ is the adjacency matrix in Definition \ref{def:2.16}.

To state Lemma \ref{lem38}, we recall the definition of the Kronecker product.
\begin{Def}\label{def:kronecker}
For matrices $A=(a_{ij})$ and $B$, we denote the 
\textit{Kronecker product} as $A \otimes B$.
It is a block matrix where the $(i,j)$-th block is the product of 
$a_{ij}$ and the matrix $B$.      
\end{Def}

    \begin{lem}\label{lem38}
        Let $H$ be a finite hypergraph with no loops,
        where every hypervertex belongs to at least two 
        hyperedges.
        Let $\#V(H)=n $ and $ \#E(H)=m$.
        Let $B_H$ be the associated bipartite graph of $H$, and 
        $\phi:E(R(B_H)) \to \mathcal{S}_k$ be a
        permutation voltage assignment for $B_H$.
        Let the hypergraph $\Bar{H}$ be a $k$-fold hypergraph covering of $H$ corresponding to $\phi$ and $\Gamma=\langle \phi(e) \mid e \in R(B_H) \rangle$.
        Let $\mathbb{P}$ be the permutation representation of $\Gamma$.
        Then the following equality holds:
        \begin{equation*}
            A(B_{\Bar{H}})=\sum_{g \in \Gamma}(\mathbb{P}(g) \otimes A(B_H)_g).
        \end{equation*}
    \end{lem}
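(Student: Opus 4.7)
My plan is to prove the equality entry-by-entry after first fixing a convenient indexing of the vertex set of $B_{\bar{H}}$. The starting point is Lemma \ref{lem36}, which gives an isomorphism between $\bar{H}$ and the hypergraph $H^{B_H^\phi}$ associated with the derived bipartite graph $B_H^\phi$. Unwinding the construction, this identifies $V(B_{\bar{H}})$ with $V(B_H) \times [k]$ (or equivalently $[k] \times V(B_H)$, which is the ordering compatible with the Kronecker product). I would fix this second ordering, so that $A(B_{\bar{H}})$ becomes a $k \times k$ block matrix whose blocks are indexed by $[k] \times [k]$ and have size $(n+m) \times (n+m)$.

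Next, I would compute the $((i,u),(j,v))$-entry of each side. On the left, by the construction of the derived graph in Definition \ref{def:pva} together with the symmetry $\phi(e^{-1})=\phi(e)^{-1}$, the vertices $(i,u)$ and $(j,v)$ are adjacent in $B_{\bar{H}} \cong B_H^\phi$ precisely when $(u,v) \in E(R(B_H))$ and $i = \phi((u,v))\,j$. On the right, expanding the Kronecker product according to Definition \ref{def:kronecker} and using Definition \ref{def:rep} and Definition \ref{def:38} gives
\begin{equation*}
\bigl(\mathbb{P}(g) \otimes A(B_H)_g\bigr)_{((i,u),(j,v))} = \mathbb{P}(g)_{ij}\, a_{uv}^{(g)} = \delta_{i,g(j)}\, a_{uv}^{(g)}.
\end{equation*}
Summing over $g \in \Gamma$, the $(u,v)$-entry $a_{uv}^{(g)}$ is nonzero only for the unique $g = \phi((u,v))$ (when $(u,v) \in E(R(B_H))$), so the whole sum equals $1$ iff $(u,v) \in E(R(B_H))$ and $\phi((u,v))(j) = i$. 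This matches the left-hand side exactly.

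The only real subtlety is the bookkeeping of indices: one must be careful that the vertex ordering used to write $A(B_{\bar{H}})$ is compatible with the "$[k]$-index first, $V(B_H)$-index second" convention that the Kronecker product forces on the right-hand side. Once that convention is fixed, the argument reduces to a direct verification. I expect this to be the main potential pitfall; the underlying combinatorics is immediate from the fact that $\phi$ assigns a single group element to each directed edge, so the sum $\sum_{g \in \Gamma}$ collapses to a single nonzero contribution per edge of $B_H$.
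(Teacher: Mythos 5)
Your proposal is correct and follows essentially the same route as the paper: identify $V(B_{\bar H})$ with $[k]\times V(B_H)$ via Lemma \ref{lem36}, view both sides as $k\times k$ block matrices of size $(n+m)\times(n+m)$, and verify entrywise that the sum over $g\in\Gamma$ collapses to the single term $g=\phi((u,v))$, matching the adjacency condition $i=\phi(e)j$ in the derived graph. The index-ordering caveat you flag is precisely what the paper handles by fixing the vertex ordering $(v_1^{(1)},\dots,e_m^{(1)},\dots,v_1^{(k)},\dots,e_m^{(k)})$ before comparing blocks.
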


\begin{proof}
    By Lemma~\ref{lem36}, we have $A(B_{\Bar{H}}) = A(B_H^\phi)$.  
    The vertex set of $B_H^\phi$ can be ordered as 
    $(v_1^{(1)},\dots,v_n^{(1)},e_1^{(1)},\dots,e_m^{(1)},\dots,
    v_1^{(k)},\dots,v_n^{(k)},e_1^{(k)},\dots,e_m^{(k)})$,
    where $v_i \in V(H), \ e_j \in E(H)$.
    We represent both sides of the equation as $k \times k$ block matrices. 
    The matrix on the right hand side is given by:
    \begin{equation*}
        \sum_{g \in \Gamma}(\mathbb{P}(g) \otimes A(B_H)_g)
     =(A_{ij})_{1\le i,j \le k},
    \end{equation*}
    with
    $A_{ij}=\sum_{g \in \Gamma}p_{ij}^{(g)}A(B_H)_g \in M_{n+m}(\ZZ)$.
    The matrix on the left hand side, $A(B_H^\phi)$,
    can also be viewed as $k \times k$ block matrix 
    $(\Bar{A}_{ij})_{1\le i,j\le k}$. Each matrix  
    $\Bar{A}_{ij}$ is $(n+m) \times (n+m)$ matrix 
    representing the adjacency between 
    $(v_1^{(i)},\dots,v_n^{(i)},e_1^{(i)},\dots,e_m^{(i)})$
    and 
    $(v_1^{(j)},\dots,v_n^{(j)},e_1^{(j)},\dots,e_m^{(j)})$.
    It suffices to show that $\Bar{A}_{ij}=A_{ij}$ for all $1 \le i,j \le k$.
    We prove this by comparing their $(s,t)$-th entry for $1 \le s,t \le n+m$.
    Consider the $(s,t)$-th entry of $A_{ij}$,
    given by $\sum_{g \in \Gamma}p_{ij}^{(g)}a_{st}^{(g)}$.
    The entry $a_{st}^{(g)}=1$ if and only if there is 
    a directed edge $e=(v_s,v_t) \in E(R(B_H))$ with $\phi(e)=g$.
    The entry $p_{ij}^{(g)}=1$ if and only if $i=g(j)$.
    Both $a_{st}^{(g)}=1$ and $p_{ij}^{(g)}=1$ happen when
    $e=(v_s,v_t) \ \phi(e)=g $ and $i=g(j)$.
    Therefore, if there exists an element $g \in \Gamma$ such that 
    $a_{st}^{(g)}p_{ij}^{(g)}=1$, then it is unique since 
    it is the image of the map $\phi$.
    Then the sum $\sum_{g \in \Gamma}p_{ij}^{(g)}a_{st}^{(g)}$
    is $1$ if there exists an element $g \in \Gamma$ that 
    satisfies both conditions.
    Next, We consider the $(s,t)$-th entry of $\Bar{A}_{ij}$.
    This entry is $1$ if and only if there exists a directed edge 
    $(v_s^{(i)},v_t^{(j)})$.
    This happens when $e=(v_s,v_t) \in E(R(B_H))$ and 
    $i=\phi(e)j$ hold.
    Thus, the $(s,t)$-th entry of $\Bar{A}_{ij}$ is $1$
    if and only if there is an edge $e=(v_s,v_t)\in E(R(B_H))$ 
    with $g=\phi(e)$ such that $i=g(j)$.
    This condition is exactly the same as for the $(s,t)$-th entry of 
    $A_{ij}$ to be $1$.
    Therefore, their entries are equal, which completes the proof.

\end{proof}

\section{Main theorem}
In this section, we prove Theorem \ref{thm12},
which is stated as Theorem \ref{thm53} in this section.

    \begin{thm}\label{thm41}
        Let us adopt the same assumption as in Theorem \ref{thm11}.
        Then the Bartholdi zeta function of $\Bar{H}$ is decomposed as follows:
        \begin{equation*}
            \zeta(\Bar{H},u,t)^{-1}=
            \zeta(H,u,t)^{-m_1}
            (1-(1-u)^2t)^{(k-m_1)(m-n)}
            \prod_{i=2}^s M_i^{m_i},
        \end{equation*}
        where 
        \begin{equation*}
         M_i=\det \left(I_{f_in}-\sqrt{t}\sum_{g \in \Gamma}(\rho_i(g) \otimes A(B_H)_g)+
         (1-u)tf_i \circ \left(D(B_H)-(1-u)I_n\right)\right).
        \end{equation*}
    \end{thm}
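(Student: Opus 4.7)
The plan is to apply the determinant formula of Theorem \ref{thm213} directly to $\Bar{H}$, convert the resulting $kn\times kn$ determinant into a statement about $H$ via Lemmas \ref{lem38} and \ref{lem34}, and then read off the decomposition block by block.

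For the first step, note that since $\Bar{H}$ is a $k$-fold covering, $|V(B_{\Bar{H}})|=kn$ and $|E(B_{\Bar{H}})|=km$, so Theorem \ref{thm213} gives
$$\zeta(\Bar{H},u,t)^{-1} = \bigl(1-(1-u)^2 t\bigr)^{k(m-n)}\det\bigl(I_{kn} - \sqrt{t}\,A(B_{\Bar{H}}) + (1-u)t\bigl(D(B_{\Bar{H}}) - (1-u)I_{kn}\bigr)\bigr).$$
Next, Lemma \ref{lem38} yields $A(B_{\Bar{H}}) = \sum_{g\in\Gamma}\mathbb{P}(g)\otimes A(B_H)_g$; and because a covering map preserves vertex degrees, the ordering of $V(B_H^{\phi})$ used in the proof of Lemma \ref{lem38} gives $D(B_{\Bar{H}}) = I_k\otimes D(B_H)$ and $I_{kn} = I_k\otimes I_n$. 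Setting $B := I_n + (1-u)t\bigl(D(B_H) - (1-u)I_n\bigr)$, the determinant collapses to
$$\det\Bigl(I_k\otimes B - \sqrt{t}\sum_{g\in\Gamma}\mathbb{P}(g)\otimes A(B_H)_g\Bigr).$$

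For the third and decisive step, Lemma \ref{lem34} supplies an invertible $M$ with $M^{-1}\mathbb{P}(g)M = m_1\circ I_1 \oplus m_2\circ\rho_2(g) \oplus\cdots\oplus m_s\circ\rho_s(g)$ for every $g\in\Gamma$; in particular $M^{-1}I_k M = m_1\circ I_1 \oplus \cdots \oplus m_s\circ I_{f_s}$. Conjugating by $M\otimes I_n$ preserves the determinant and, using $(X\oplus Y)\otimes Z = (X\otimes Z)\oplus(Y\otimes Z)$, block-diagonalises the entire matrix into, for each $i$, exactly $m_i$ identical diagonal blocks of size $f_i n$ equal to
$$I_{f_i}\otimes B - \sqrt{t}\sum_{g\in\Gamma}\rho_i(g)\otimes A(B_H)_g \;=\; f_i\circ B - \sqrt{t}\sum_{g\in\Gamma}\rho_i(g)\otimes A(B_H)_g.$$
For $i\geq 2$, the determinant of this block is by definition $M_i$. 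For $i=1$, since $\rho_1$ is trivial of degree one and $\sum_g A(B_H)_g = A(B_H)$, the block reduces to $B - \sqrt{t}\,A(B_H)$, whose determinant equals $(1-(1-u)^2 t)^{-(m-n)}\zeta(H,u,t)^{-1}$ by Theorem \ref{thm213} applied to $H$. Assembling the factors and using $k(m-n)-m_1(m-n) = (k-m_1)(m-n)$ produces the claimed formula.

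The main obstacle is the bookkeeping in the conjugation step: one must verify that a single change of basis $M\otimes I_n$ simultaneously block-diagonalises $I_k\otimes B$ and every summand $\mathbb{P}(g)\otimes A(B_H)_g$ in compatible ways, so that the factor $B$ cleanly pairs with $I_{f_i}$ inside each irreducible block. Once this tensor-product identity is unpacked, the rest of the argument is a mechanical accounting of multiplicities and of the exponent on $(1-(1-u)^2 t)$.
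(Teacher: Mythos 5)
Your proposal is correct and follows essentially the same route as the paper: apply Theorem \ref{thm213} to $\Bar{H}$, substitute $A(B_{\Bar{H}})=\sum_{g}\mathbb{P}(g)\otimes A(B_H)_g$ (Lemma \ref{lem38}) and $D(B_{\Bar{H}})=I_k\otimes D(B_H)$, conjugate by $M\otimes I_n$ via Lemma \ref{lem34} to block-diagonalise, and identify the trivial-representation blocks with $\zeta(H,u,t)^{-1}$ via Theorem \ref{thm213} applied to $H$. The ``obstacle'' you flag is handled exactly as you suggest: since $M^{-1}I_kM=I_k$, the term $I_k\otimes B$ is invariant under the conjugation and splits compatibly with the blocks of $\sum_g M^{-1}\mathbb{P}(g)M\otimes A(B_H)_g$.
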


\begin{proof}
The following argument follows the method introduced in \cite{Li}.
We begin with the following identity:
    \begin{equation*}
        D(B_{\Bar{H}})=I_k \otimes D(B_H).
    \end{equation*}
    By Lemma \ref{lem38}, we have the following equality:
        \begin{equation*}
            A(B_{\Bar{H}})=\sum_{g \in \Gamma}(\mathbb{P}(g) \otimes A(B_H)_g).
        \end{equation*}
    Therefore, by Theorem \ref{thm213}, we obtain
        \begin{align}\label{eq:det}
            \zeta(\Bar{H},u,t)^{-1}
            &=\zeta(B_{\Bar{H}},u,\sqrt{t})^{-1} \notag \\
            &=(1-(1-u)^2t)^{k(m-n)}
            \det(I_{kn}-\sqrt{t}A(B_{\Bar{H}})+(1-u)t(D(B_{\Bar{H}})-(1-u)I_{kn})) \notag \\
            &=(1-(1-u)^2t)^{k(m-n)} \cdot \notag \\
            &\det\left(I_{kn}-
            \sqrt{t}\left(\sum_{g \in \Gamma}(\mathbb{P}(g )\otimes A(B_H)_g)\right)
            +(1-u)t \left(I_k \otimes D(B_H)-(1-u)I_{kn} \right)\right).
        \end{align}
        Then we consider the expression
        \begin{equation*}
            I_{kn}-
            \sqrt{t}\left(\sum_{g \in \Gamma}(\mathbb{P}(g) \otimes A(B_H)_g)\right)
            +(1-u)t(I_k \otimes D(B_H)-(1-u)I_{kn}).
        \end{equation*}
        According to Lemma \ref{lem34}, 
        we can find an invertible matrix $M$.
        Multiplying the expression by $M^{-1} \otimes I_{n}$ from the left 
         and $M \otimes I_{n}$ from the right, we obtain:

\begin{align*}
    \left(M^{-1} \otimes I_{n} \right) \left(\sum_{g \in \Gamma}(\mathbb{P}(g) \otimes A(B_H)_g)\right)
    \left( M \otimes I_{n} \right)
    &=\sum_{g \in \Gamma}(M^{-1}\mathbb{P}(g) M)\otimes A(B_H)_g\\
    &=\sum_{g \in \Gamma} \left(\oplus_{i=1}^s m_i \circ \rho_i(g) \right) 
    \otimes A(B_H)_g\\
    &=\sum_{g \in \Gamma}
    \left( m_1 \circ \rho_1(g) \oplus \oplus_{i=2}^s m_i \circ \rho_i(g) \right) 
    \otimes A(B_H)_g\\
    &=\sum_{g \in \Gamma}
    \left(m_1 \oplus \oplus_{i=2}^s m_i \circ \rho_i(g) \right) 
    \otimes A(B_H)_g\\
    &=\sum_{g \in \Gamma}\left(m_1 \otimes A(B_H)_g \right)
    \oplus \left(\left(\oplus_{i=2}^s m_i \circ \rho_i(g) \right) 
    \otimes A(B_H)_g\right)\\
    &=\left(I_{m_1} \otimes A(B_H)\right) \oplus
    \left( \oplus_{i=2}^s {m_i \circ \left(\sum_{g \in \Gamma} (\rho_i(g) \otimes A(B_H)_g )\right)} \right).
\end{align*}

It also satisfies the following:
\begin{align*}
    \left(M^{-1} \otimes I_{n} \right) \left(I_k \otimes D(B_H)\right)
    \left( M \otimes I_{n} \right)
    &=I_k \otimes D(B_H).
\end{align*}

 In conclusion, the determinant in equation \eqref{eq:det} simplifies to the following:
\begin{align*}
    &\det \left(I_{m_1n}-\sqrt{t}\left(I_{m_1}\otimes A(B_H) \right)+
    (1-u)t (I_{m_1} \otimes D(B_H)-(1-u)I_{m_1n}) \right)\\
    &\cdot \prod_{i=2}^s
    \det \left(I_{m_if_in}-\sqrt{t}\left( m_i \circ \left(\sum_{g \in \Gamma} \rho_i(g) \otimes A(B_H)_g\right) \right)+
    (1-u)t \left(I_{m_if_i} \otimes D(B_H)-(1-u)I_{f_im_in}\right) \right)\\
    &=\det \left(I_{n}-\sqrt{t} A(B_H) +
    (1-u)t ( D(B_H)-(1-u)I_{n}) \right)^{m_1}\\
    &\cdot \prod_{i=2}^s
    \det \left(I_{f_in}-\sqrt{t} \left(\sum_{g \in \Gamma} \rho_i(g) \otimes A(B_H)_g\right) +
    (1-u)t \left(I_{f_i} \otimes D(B_H)-(1-u)I_{f_in}\right) \right)^{m_i}\\
    \intertext{By Theorem \ref{thm213},}
    &=\zeta(H,u,t)^{-m_1}(1-(1-u)^2t)^{-m_1(m-n)}
    \cdot \prod_{i=2}^s M_i^{m_i}.
\end{align*}

Combining all of the above, we obtain the following:
\begin{align*}
    \zeta(\Bar{H},u,t)^{-1}
    &=(1-(1-u)^2t)^{k(m-n)}\zeta(H,u,t)^{-m_1}
    (1-(1-u)^2t)^{-m_1(m-n)}\cdot \prod_{i=2}^s M_i^{m_i}\\
    &=\zeta(H,u,t)^{-m_1}(1-(1-u)^2t)^{(k-m_1)(m-n)}\cdot \prod_{i=2}^s M_i^{m_i}.
\end{align*}
\end{proof}

We now turn to the interpretation of the factors $M_i$ appearing in Theorem \ref{thm41}. 
In order to describe them in terms of $L$-functions, 
we work under the same assumption as in Theorem \ref{thm11}. 
More precisely, we define $L$-functions with respect to a representation $\rho$ and 
the permutation voltage assignment $\phi$ defined on $E(R(B_H))$. 
From this point on, we work with the symmetric digraph $R(B_H)$ rather than $B_H$. 
    This is justified by Remark \ref{rem:corres},
    which establishes a canonical bijection between paths 
    (and thus cycles, prime cycles, and reduced cycles) in $B_H$ and in $R(B_H)$.
Therefore, no information is lost in passing to $R(B_H)$.

For any sequence of directed edges $C=(e_1,\dots,e_s)$ and for any representation $\rho$, we set $\rho(\phi(C))=\rho(\phi(e_1))\cdots \rho(\phi(e_s))$.

We define the Bartholdi $L$-function as follows:

    \begin{Def}[\cite{Mizuno}]\label{def:L-funct.}
    Let us adopt the same assumption as in Theorem \ref{thm41},
    and let $\rho$ be a representation of $\Gamma$ with degree $l$.
        We define the \textit{Bartholdi $L$-function} of the hypergraph $H$
        with respect to $\rho$ and $\phi$ as follows:
        \begin{equation*}
            \zeta(H,\rho,\phi,u,t)
            =\zeta(B_H,\rho,\phi,u,\sqrt{t})=
            \prod_{[C]}\det\left(I_l -\rho(\phi(C))u^{cbc(C)}{\sqrt{t}}^{|C|} \right)^{-1},
        \end{equation*}
        where, $[C]$ runs over all equivalence classes of prime cycles of $R(B_H)$,
        and $u,t \in \CC$ with $|u| $ and $ |t|$ both sufficiently small.
    \end{Def}

The value of Bartholdi $L$-function is 
independent of the choice of the representatives
for the equivalence classes of prime cycles,
due to the following Lemma \ref{lem:cycleeq}.

\begin{lem}\label{lem:cycleeq}
Let us adopt the same assumption as in Definition \ref{def:L-funct.}.
Let $C$ be a cycle,
and let $\Tilde{C} \in [C]$, where $[C]$ is the equivalence class of $C$.
Then the following holds:
\begin{equation}\label{eq:lemCD}
    \det \left(I_{l}-\rho(\phi(C))u^{\mathrm{cbc}(C)}t^{|C|}\right)=
    \det \left(I_{l}-\rho(\phi(\Tilde{C}))u^{\mathrm{cbc}(\Tilde{C})}t^{|\Tilde{C}|}\right).
\end{equation}
\end{lem}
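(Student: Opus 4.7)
The plan is to verify separately that the three quantities appearing in \eqref{eq:lemCD}---namely $|C|$, $\mathrm{cbc}(C)$, and the conjugacy class of $\rho(\phi(C))$ in $GL(l,\CC)$---are all invariant under the equivalence relation on cycles introduced in Definition \ref{def:dipath}(2), i.e.\ under cyclic rotation of the edge sequence.

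Write $C=(e_1,\ldots,e_n)$ and, using Definition \ref{def:dipath}(2), fix $k$ with $\tilde{C}=(e_{k+1},\ldots,e_n,e_1,\ldots,e_k)$ (indices mod $n$). The equality $|\tilde{C}|=n=|C|$ is then immediate. For the cyclic bump count, I would transfer $C$ and $\tilde{C}$ back to the corresponding cycles in the bipartite graph $B_H$ via the canonical bijection of Remark \ref{rem:corres}; since the defining sum for $\mathrm{cbc}$ given in Section 2 is cyclically closed (with $v_{n+1}=v_1$ and $e_{n+1}=e_1$), cyclic rotation merely permutes the indices being summed over, so $\mathrm{cbc}(\tilde{C})=\mathrm{cbc}(C)$.

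For the remaining determinant factor, set $g_i=\phi(e_i)$ and
\begin{equation*}
X=\rho(g_1)\rho(g_2)\cdots\rho(g_k),\qquad Y=\rho(g_{k+1})\rho(g_{k+2})\cdots\rho(g_n),
\end{equation*}
so that $\rho(\phi(C))=XY$ and $\rho(\phi(\tilde{C}))=YX$ by the homomorphism property of $\rho$. Since $\rho$ takes values in $GL(l,\CC)$, the matrix $X$ is invertible, and
\begin{equation*}
\rho(\phi(\tilde{C}))=YX=X^{-1}(XY)X=X^{-1}\rho(\phi(C))X,
\end{equation*}
so $\rho(\phi(C))$ and $\rho(\phi(\tilde{C}))$ are similar. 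In particular they share a characteristic polynomial, and hence for any scalar $\lambda\in\CC$ we have $\det(I_l-\lambda\rho(\phi(C)))=\det(I_l-\lambda\rho(\phi(\tilde{C})))$; specializing $\lambda=u^{\mathrm{cbc}(C)}t^{|C|}=u^{\mathrm{cbc}(\tilde{C})}t^{|\tilde{C}|}$ gives \eqref{eq:lemCD}.

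I do not anticipate any real obstacle: the proof is essentially formal. The only mildly delicate point is keeping the cyclic indexing consistent across Definition \ref{def:dipath}(2), the definition of $\mathrm{cbc}$ (originally stated for hypergraph cycles and imported into $R(B_H)$ via Remark \ref{rem:corres}), and the product representation $\rho(\phi(\tilde{C}))=\rho(g_{k+1})\cdots\rho(g_n)\rho(g_1)\cdots\rho(g_k)$. Note that the hypothesis that $\rho$ be unitary, used elsewhere, is not needed here: invertibility of $X$ alone suffices.
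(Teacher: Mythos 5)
Your proposal is correct and follows essentially the same route as the paper: the paper also conjugates by the initial segment $N=\rho(\phi(e_{i_1}))\cdots\rho(\phi(e_{i_{s-1}}))$ (your $X$) to turn $\rho(\phi(C))$ into $\rho(\phi(\tilde{C}))$, using invertibility of the images of $\rho$, and then notes that $|C|$ and $\mathrm{cbc}(C)$ are invariant under cyclic rotation. Your observation that unitarity is not needed here is consistent with the paper, which likewise only uses that $\rho$ takes values in $GL(l,\CC)$.
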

\begin{proof}
It suffices to consider the case $\Tilde{C} \neq C$, as the case $\Tilde{C}=C$ is trivial.
    Let $C=(e_{i_1},\dots,e_{i_k})$ be a cycle of length $k$.
    Since $\Tilde{C}$ is equivalent to $C$, $\Tilde{C}$ is a cyclic permutation of $C$.
    That is, $\Tilde{C}=(e_{i_s},\dots,e_{i_{s-1}})$ for some $s \in \{2,\dots,k\}$.
    Let $N=\rho(\phi(e_{i_1}))\rho(\phi(e_{i_2})) 
    \cdots \rho(\phi(e_{i_{s-1}}))$
    and we multiply the 
    left hand side of equation \eqref{eq:lemCD}
    by $\det (N^{-1})$ on the left 
    and by $\det (N)$ on the right.
    \begin{align*}
    \det \left(I_{l}-\rho(\phi(C))u^{\mathrm{cbc}(C)}t^{|C|}\right)
    &=\det (N^{-1})\det 
        \left(I_{l}-\rho(\phi(C))u^{\mathrm{cbc}(C)}t^{|C|}\right)\det (N)\\
        &=\det \left(N^{-1}I_lN-N^{-1}\rho(\phi(C))N
        u^{\mathrm{cbc}(C)}t^{|C|}\right)\\
        &=\det \left( I_l-
        \rho(\phi(e_{i_s})\cdots \rho(\phi(e_{i_k}))\rho(\phi(e_{i_1}))\cdots \rho(\phi(e_{i_{s-1}}))
        u^{\mathrm{cbc}(C)}t^{|C|}\right)\\
        &=\det \left( I_l-
        \rho(\phi(e_{i_s})\cdots \cdots \rho(\phi(e_{i_s-1}))
        u^{\mathrm{cbc}(C)}t^{|C|}\right)\\
        &=\det \left(I_{l}-\rho(\phi(\Tilde{C}))u^{\mathrm{cbc}(C)}t^{|C|}\right)
    \end{align*}
    Since $\Tilde{C}$ is a cyclic permutation of $C$, their length and 
    $\mathrm{cbc}$ values are equal.
    That is, $|C|=|\Tilde{C}|$ and 
    $\mathrm{cbc}(C)=\mathrm{cbc}(\Tilde{C})$.
    Therefore, the equality holds.
\end{proof}

We recall the notation of Lyndon words,
which provides a convenient way to represent the prime cycles of a graph.
Using this terminology, the Hashimoto expression for the $L$-function,
presented in Proposition \ref{Prop:hashimoto},
can be derived more easily.

\begin{Def}[\cite{Lothaire}]\label{def:lotha}
Let $A$ be a finite set with a total order,
which extends to a lexicographic order on the free monoid $A^*$ 
generated by $A$. Each element in $A^*$ is called a \textit{word}.
The empty word is denoted by $1$.
\begin{enumerate}[(1)]
    \item The word $u \in A^*$ has length $k$
    if $u=u_1\dots u_k$, where $u_i \in A$ for all $i\in \{1,\dots,k\}$.
    \item Two words $u,v \in A^*$ are \textit{conjugate} 
    if there exist two words $s,t \in A^*$ such that $u=st,\ v=ts$.
    This makes an equivalence relation on $A^*$.
    \item A word $u \in A^*$ is \textit{primitive}
    if $u \neq 1$ and 
    it is not a power of another word; that is 
    $u  \neq w^l$ for any word $ w \in A^* $ and for any integer $ l \ge 2$.
    \item  A word $w \in A^*$ is a
    \textit{Lyndon word} if it is primitive and 
    it is the lexicographically smallest word in its conjugacy class.
\end{enumerate}
\end{Def}

\begin{lem}[{\cite[Proposition 1.3.3]{Lothaire}}]\label{lem:prilo}
Let $A$ be a finite set and $A^*$ be the free monoid generated by $A$.
Then primitive words are conjugate to only primitive words.
\end{lem}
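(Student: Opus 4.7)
The plan is to argue by contradiction: assume $u$ is primitive and $u \sim v$ (conjugate), but $v$ is not primitive. By definition of conjugacy, write $u = st$ and $v = ts$ for some $s, t \in A^{*}$. First I would dispose of the trivial case: if $v = 1$, then $ts = 1$ forces $s = t = 1$, hence $u = 1$, contradicting the primitivity of $u$. So we may assume $v = w^{l}$ for some $w \in A^{*}$ and $l \geq 2$. Set $d = |w|$, $n = |v| = ld$, and $k = |t|$. Note $|u| = |v| = n$, so it suffices to show $u$ is also a nontrivial power, which will contradict the primitivity of $u$.

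The key step is to analyze how the factorization $v = ts$ aligns with the block structure $v = w \cdot w \cdots w$ ($l$ copies). Write $k = qd + r$ with $0 \leq r < d$ by the division algorithm. If $r = 0$, then $t$ ends exactly at a block boundary of $w^{l}$, so $t = w^{q}$ and $s = w^{l-q}$, which immediately gives $u = st = w^{l-q} w^{q} = w^{l}$, a nontrivial power.

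The main obstacle is the case $r > 0$, where $t$ cuts through a block of $w$. Here I would split $w = w_{1} w_{2}$ with $|w_{1}| = r$ and $|w_{2}| = d - r$, so that
\begin{equation*}
    t = w^{q} w_{1}, \qquad s = w_{2} w^{l - q - 1}.
\end{equation*}
A short computation then gives
\begin{equation*}
    u = st = w_{2} w^{l - q - 1} w^{q} w_{1} = w_{2} (w_{1} w_{2})^{l - 1} w_{1} = (w_{2} w_{1})^{l},
\end{equation*}
so $u$ is the $l$-th power of the conjugate word $w_{2} w_{1}$. Since $l \geq 2$, $u$ is not primitive, contradicting the hypothesis. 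This exhausts the cases and completes the proof. The only real content is this splitting argument for the off-boundary case; the rest is bookkeeping.
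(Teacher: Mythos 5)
Your proof is correct. Note that the paper does not prove this lemma at all --- it simply cites \cite[Proposition 1.3.3]{Lothaire} --- so there is no in-paper argument to compare against; your self-contained derivation is the standard one and is complete. The case split on $r = |t| \bmod |w|$ is handled properly, and the identity $w_2(w_1w_2)^{l-1}w_1 = (w_2w_1)^l$ does the real work; the only cosmetic point is that after disposing of $v=1$ you should remark that $w \neq 1$ (else $v = w^l = 1$), so that $d \ge 1$ and the division algorithm applies, and that $w_2w_1 \neq 1$ in the final step --- both are immediate.
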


\begin{rem}\label{rem:Lyncom}
By Definition \ref{def:lotha} and Lemma \ref{lem:prilo},
the set of Lyndon words over the finite set $A$ 
is a complete set of representatives for 
the conjugacy classes of primitive words over $A$.    
\end{rem}

Under the same assumption as in Theorem \ref{thm11},
we can rephrase the equivalence relation 
on the sequences of the directed edges,
defined by cyclic permutation,
using the terminology of words
on their indices.
Two sequences of directed edges 
$C=(e_{i_1},\dots,e_{i_{k}})$ and 
$D=(e_{j_1},\dots,e_{j_{k}})$ are equivalent 
if and only if
their index words $i_1 \cdots i_{k}$ and $j_1 \cdots j_k$
are conjugate as the words over the finite set $\{1,\dots,2m\}$.
This is a direct consequence of the definition of conjugacy.

We prepare the following notation for the proof of Theorem \ref{thm53}.
Under the same assumption as in Theorem \ref{thm11},
we define the $2ml \times 2ml$ matrices matrices $B$ and $J$.
The entries of these matrices are $l \times l$ matrices.
\begin{align*}
    B&=(b_{\alpha \beta})_{1\le \alpha,\ \beta \le 2m}, \quad 
    b_{\alpha\beta} =
    \begin{cases}
        \rho(\phi(e_\alpha)) &\text{if} \   t(e_\alpha)=o(e_\beta), \ e_\alpha \neq e_\beta^{-1},\\
        0_l & \text{otherwise},
    \end{cases}\\
    J&=(j_{\alpha\beta})_{1\le \alpha, \ \beta  \le 2m},\quad 
    j_{\alpha\beta}=
    \begin{cases}
        \rho(\phi(e_\alpha)) & \text{if} \  e_\alpha = e_\beta^{-1},\\
        0_l & \text{otherwise}.
    \end{cases}
\end{align*}

\begin{lem}\label{lem:45}
Let us adopt the same assumption as in Theorem \ref{thm11}.
Let $M_1,\dots,M_{2m}$ be $2ml \times 2ml$ matrices such that, for each $i$, the rows indexed by $l(i-1)+1,\dots,li$ of $M_i$ coincide with the corresponding rows of $B+uJ$, while all other rows are zero.
Let $C=(e_{s_1},\dots, e_{s_k})$ be a sequence of directed edges.
Then we have 
\begin{equation*}
    \det \left(I_{2ml}-M_{s_1}\cdots M_{s_k}t^{|C|}\right)=
    \begin{cases}
        \det \left(I_{l}-\rho(\phi(C))u^{\mathrm{cbc}(C)}t^{|C|}\right),
        & \text{if} \  C \text{ is a cycle},\\
        1, & \text{otherwise.}
    \end{cases}
\end{equation*}
\end{lem}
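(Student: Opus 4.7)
The plan is to exploit the extreme sparsity of each $M_i$: by construction, all block rows of $M_i$ vanish except the $i$-th, which equals the $i$-th block row of $B+uJ$. I would first establish, by induction on $k$, that $M_{s_1}M_{s_2}\cdots M_{s_k}$ has nonzero block entries only in its $s_1$-st block row, and that the block in position $(s_1,j)$ equals the telescoping product
\[
(b_{s_1,s_2}+uj_{s_1,s_2})(b_{s_2,s_3}+uj_{s_2,s_3})\cdots (b_{s_{k-1},s_k}+uj_{s_{k-1},s_k})(b_{s_k,j}+uj_{s_k,j}).
\]
The induction step is immediate from the block-product formula: in the product $(M_{s_1}\cdots M_{s_{k-1}})\,M_{s_k}$, the middle summation index is forced to equal $s_k$ since $M_{s_k}$ has only its $s_k$-th block row nonzero, while the $s_1$-st block row is the only surviving one by the inductive hypothesis on $M_{s_1}\cdots M_{s_{k-1}}$.

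Since $M_{s_1}\cdots M_{s_k}$ is supported on a single block row, the matrix $I_{2ml}-M_{s_1}\cdots M_{s_k}t^{|C|}$ has $I_l$ in every diagonal block other than $(s_1,s_1)$ and vanishes in every off-diagonal block outside row $s_1$. A symmetric permutation of block rows and columns brings it into upper block-triangular form (equivalently, one performs block cofactor expansion along the $(2m-1)l$ standard-basis rows), yielding
\[
\det\bigl(I_{2ml}-M_{s_1}\cdots M_{s_k}t^{|C|}\bigr)=\det\bigl(I_l-(M_{s_1}\cdots M_{s_k})_{s_1,s_1}\,t^{|C|}\bigr),
\]
so everything is reduced to evaluating the single diagonal block.

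Setting $j=s_1$ in the telescoping formula, that block equals the cyclic product $\prod_{i=1}^{k}(b_{s_i,s_{i+1}}+uj_{s_i,s_{i+1}})$ with the convention $s_{k+1}:=s_1$. By the definitions of $B$ and $J$, the factor $b_{\alpha\beta}+uj_{\alpha\beta}$ equals $\rho(\phi(e_\alpha))$ when $t(e_\alpha)=o(e_\beta)$ and $e_\alpha\neq e_\beta^{-1}$, equals $u\,\rho(\phi(e_\alpha))$ when $e_\alpha=e_\beta^{-1}$ (in which case $t(e_\alpha)=o(e_\beta)$ automatically), and vanishes otherwise. Thus the product is zero unless $t(e_{s_i})=o(e_{s_{i+1}})$ for every $i\in\{1,\dots,k\}$ cyclically, which is exactly the condition that $C$ is a cycle in $R(B_H)$. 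When $C$ is a cycle, the product evaluates to $\rho(\phi(e_{s_1}))\cdots\rho(\phi(e_{s_k}))=\rho(\phi(C))$, multiplied by one extra factor of $u$ for each index $i$ with $e_{s_i}=e_{s_{i+1}}^{-1}$; by the definition of $\mathrm{cbc}$ on a cycle in $R(B_H)$, the number of such indices is exactly $\mathrm{cbc}(C)$, giving the desired $u^{\mathrm{cbc}(C)}\rho(\phi(C))$.

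I expect the main obstacle to be purely bookkeeping: one must verify that the cyclic closing condition $t(e_{s_k})=o(e_{s_1})$ is genuinely captured by the final factor $b_{s_k,s_1}+uj_{s_k,s_1}$ in the $(s_1,s_1)$-block, so that a non-cycle is correctly detected as producing the zero block, and that the count of $u$-contributions in the cyclic product matches $\mathrm{cbc}(C)$ on the nose. Once the sparsity propagation and the block-determinant reduction are carefully written out, the identity follows directly.
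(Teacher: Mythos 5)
Your proposal is correct and follows essentially the same route as the paper: exploit the single-nonzero-block-row sparsity of each $M_i$ to reduce the determinant to the $(s_1,s_1)$ diagonal block, then identify that block as the telescoping cyclic product of the $b_{\alpha\beta}+uj_{\alpha\beta}$, which vanishes unless $C$ is a cycle and otherwise yields $u^{\mathrm{cbc}(C)}\rho(\phi(C))$. If anything, your final step---explicitly matching the count of $u$-factors (indices with $e_{s_i}=e_{s_{i+1}}^{-1}$, taken cyclically) to $\mathrm{cbc}(C)$---is spelled out more fully than in the paper's own proof, which stops after verifying the cycle condition.
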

\begin{proof}
The determinant on the left hand side is as follows:
\begin{equation}
    \det \left(I_{2ml}-M_{s_1}\cdots M_{s_k}t^{|C|}\right)
    =\sum_{\sigma \in S_{2ml}}\mathrm{sgn}(\sigma) a_{1,\sigma(1)}\cdots a_{2ml, \sigma(2ml)}, \label{eq:45}
\end{equation}
where $a_{ij}$ is the $(i,j)$-th entry of $A=I_{2ml}-M_{s_1}\cdots M_{s_k}t^{|C|}$.
Let $\mathbb{I}(i_1)=\{l(i_1-1)+1,\dots,li_1\}$.
For all $j \notin \mathbb{I}(i_1)$, 
the $j$-th row of the product $M_{s_1}\cdots M_{s_k}$ 
is a zero vector. This implies that for any $j \notin \mathbb{I}(i_1)$,
the $j$-th row of the matrix $A=I_{2ml}-M_{s_1}\cdots M_{s_k}t^{|C|}$
is identical to the $j$-th row of the identity matrix $I_{2ml}$.
Thus, the sum in \eqref{eq:45} is 
taken over permutations
$\sigma \in S_{2ml}$ that satisfy $\sigma(\mathbb{I}(i_1))=\mathbb{I}(i_1),  \ 
\sigma(j)=j$ for all $j \notin \mathbb{I}(i_1)$.
Equation \eqref{eq:45} simplifies as follows:
\begin{align*}
    \det \left(I_{2ml}-M_{s_1}\cdots M_{s_k}t^{|C|}\right)
    &=\sum_{\substack{\sigma \in S_{2ml}\\ \sigma(\mathbb{I}(i_1))=\mathbb{I}(i_1)\\\sigma(j)=j, {}^{\forall}j \notin \mathbb{I}(i_1)}}\mathrm{sgn}(\sigma) \prod_{i \in \mathbb{I}(i_1)}a_{i,\sigma(i)}\\
    &=\det \left(A_{\mathbb{I}(i_1)} \right),
\end{align*}
where $A_{\mathbb{I}(i_1)}$ is the $ l \times l$ block matrix of $A$ 
corresponding to the rows and columns indexed by $\mathbb{I}(i_1)$.

Now, we evaluate the product of block matrices 
$M_{s_1}\cdots M_{s_k}$.
Let us denote each $M_{s_u}$ as a $2m \times 2m$ block matrix
$\left(m_{\alpha, \ \beta }^{(s_u)}\right)_{1\le \alpha, \ \beta  \le 2m}$ where 
each block $m_{\alpha, \ \beta }^{(s_u)}$ is a $l \times l$ matrix.

Then the $(\alpha, \ \beta )$-th entry of the product is as follows:
\begin{equation*}
    \left(M_{s_1}\cdots M_{s_k} \right)_{\alpha, \ \beta }=
    \sum_{j_1,\dots,j_{k-1}}m_{\alpha, j_1}^{(s_1)}m_{j_1, j_2}^{(s_2)}\cdots m_{j_{k-1} ,\beta}^{(s_k)}.
\end{equation*}

By definition, 
$M_{s_u}=\left(m_{\alpha, \ \beta }^{(s_u)}\right)_{1\le \alpha, \ \beta  \le 2m}$ is as follows:
\begin{equation*}
    m_{\alpha, \ \beta }^{(s_u)}=\begin{cases}
        b_{\alpha, \ \beta }+uj_{\alpha, \ \beta }, & \text{if} \  \alpha=s_u,\\
        0, & \text{otherwise}.
    \end{cases}
\end{equation*}

Then the product simplifies as follows:
\begin{equation*}
        \left(M_{s_1}\cdots M_{s_k} \right)_{\alpha, \ \beta }= \delta_{\alpha,s_1}
    m_{\alpha,s_2}^{(s_1)}m_{s_2, s_3}^{(s_2)}\cdots m_{s_k ,\beta}^{(s_k)}.
\end{equation*}

We are interested in the block matrix corresponding to the 
indices in $\mathbb{I}(i_1)$. This is the diagonal block 
of the product whose index is $\alpha=\beta=i_1$.
For this block to be non-zero, the sequence of edges $C=(e_{i_1},e_{i_2},\dots,e_{i_k})$
must satisfy 
$t(e_{i_\alpha})=o(e_{i_{\alpha+1}}), $ for all $\alpha=1,\dots,k-1$ and $t(e_{i_{k}})=o(e_{i_1})$,
which is a condition for $C$ to be cycle.
This completes the proof.

\end{proof}

We prove the following lemmas that relate the 
Lyndon words and equivalence classes of prime cycles.

Under the same assumption as in Theorem \ref{thm11},
and by Remark \ref{rem:Lyncom},
the set of Lyndon words over $\{1,\dots,2m\}$ is a complete representatives for 
the conjugacy classes of primitive words over $\{1,\dots,2m\}$.
It follows that there is a bijection between
the set of Lyndon words and 
the equivalence classes of sequences of directed edges whose 
indices are primitive words over $\{1,\dots,2m\}$.

It is clear that any sequence of directed edges $C$, which is a cycle,
is prime if and only if 
the indices of $C$ is primitive words over $\{1,\dots,2m\}$.

\begin{lem}\label{lem:lynpri}
Let us adopt the same assumption as in Theorem \ref{thm11}.
Let $A=\{1,\dots,2m\}$ with $m=\#E(R(B_H))$.
    Let $\mathcal{C}$ be
    the set of equivalence classes of all prime cycles.
Let $L$ be the set of all Lyndon words over $A$.
For each $i \in A$,
let $M_i$ be the matrix that is defined in Lemma \ref{lem:45}.
Then the following holds:
    \begin{equation}\label{eq:lem46}
        \prod_{p \in L}\det \left(I_{2ml}-M_pt^{|p|}\right)
        =\prod_{[C] \in \mathcal{C}} 
    \det \left(I_l-\rho(\phi(C))u^{\mathrm{cbc}(C)}t^{|C|} \right),
    \end{equation}
    where the product on the left hand side runs
    over all Lyndon words $p \in L$, and for
     $ p=i_1 \cdots i_n\in L$, we define $M_p=M_{i_1}\cdots M_{i_n}$.
     The product on the right hand side runs over all equivalence classes 
     of prime cycles $[C]$, where for a cycle $C=(e_{i_1},\dots,e_{i_k})$,
     we define $M_C=M_{i_1}\cdots M_{i_k}$.
\end{lem}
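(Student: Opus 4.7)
The plan is to evaluate each factor on the left-hand side using Lemma \ref{lem:45}, discard the Lyndon words whose associated edge sequence is not a cycle, and identify the remaining factors with the corresponding factors on the right-hand side via the bijection between Lyndon words and equivalence classes of prime cycles that was set up immediately before the lemma.

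First I would fix a Lyndon word $p=i_1\cdots i_n \in L$ and apply Lemma \ref{lem:45} to the sequence of directed edges $C_p=(e_{i_1},\dots,e_{i_n})$. This immediately yields
\begin{equation*}
\det\left(I_{2ml}-M_p t^{|p|}\right)=
\begin{cases}
\det\left(I_l-\rho(\phi(C_p))u^{\mathrm{cbc}(C_p)}t^{|C_p|}\right), & \text{if } C_p \text{ is a cycle},\\
1, & \text{otherwise},
\end{cases}
\end{equation*}
so only Lyndon words whose associated edge sequence is closed contribute nontrivially to the product on the left.

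Next I would set up the bijection rigorously. Let $L' \subseteq L$ denote the subset of Lyndon words $p$ for which $C_p$ is a cycle. Since a Lyndon word is primitive, the observation stated just before the lemma tells us that each such $C_p$ is a prime cycle; hence $p\mapsto [C_p]$ defines a map $L' \to \mathcal{C}$. Surjectivity: given $[C] \in \mathcal{C}$ with $C=(e_{j_1},\dots,e_{j_k})$, the index word $j_1\cdots j_k$ is primitive, and by Remark \ref{rem:Lyncom} its conjugacy class contains a unique Lyndon word $p$; then $C_p$ is a cyclic permutation of $C$ and so $[C_p]=[C]$. Injectivity: two distinct Lyndon words lie in distinct conjugacy classes of primitive words, and by the observation above the lemma this corresponds to distinct equivalence classes of prime cycles.

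Finally, Lemma \ref{lem:cycleeq} guarantees that the factor $\det(I_l-\rho(\phi(C))u^{\mathrm{cbc}(C)}t^{|C|})$ depends only on the equivalence class $[C]$, so applying the bijection gives
\begin{equation*}
\prod_{p\in L}\det\left(I_{2ml}-M_p t^{|p|}\right)
=\prod_{p\in L'}\det\left(I_l-\rho(\phi(C_p))u^{\mathrm{cbc}(C_p)}t^{|C_p|}\right)
=\prod_{[C]\in\mathcal{C}}\det\left(I_l-\rho(\phi(C))u^{\mathrm{cbc}(C)}t^{|C|}\right),
\end{equation*}
as required. The main work is in the bookkeeping of the bijection $L' \leftrightarrow \mathcal{C}$; no deeper obstacle appears, since Lemma \ref{lem:45} already supplies the key determinant identity, and the infinite products make sense formally for sufficiently small $|u|,|t|$.
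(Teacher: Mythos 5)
Your proposal is correct and follows essentially the same route as the paper's proof: both arguments rest on Lemma \ref{lem:45} to kill the factors coming from non-closed edge sequences and to convert the closed ones into $l\times l$ determinants, on the fact that Lyndon words form a complete set of representatives for conjugacy classes of primitive words (which you phrase as an explicit bijection $L'\leftrightarrow\mathcal{C}$, while the paper phrases it as a splitting of the product into cycle and non-cycle classes), and on Lemma \ref{lem:cycleeq} for independence of the representative. No gap.
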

\begin{proof}

By Definition \ref{def:lotha} and Lemma \ref{lem:prilo},
the set $L$ of Lyndon words over $A$ is a complete set of representatives 
for the conjugacy classes of primitive words over $A$.
Since the index word of every prime cycle is a primitive word,
and a prime cycle is equivalent to only prime cycles,
all equivalence classes of finite sequences of directed edges
are disjoint union of 
the equivalence classes of prime cycles and the other,
whose indices are conjugate as words but not cycles.

Thus the following holds:
\begin{equation}\label{eq:detLS}
    \prod_{p \in L}\det \left(I_{2ml}-M_pt^{|p|}\right)
        =\prod_{[C] \in \mathcal{C}} 
    \det \left(I_{2ml}-M_Ct^{|C|}\right)
    \cdot
    \prod_{\substack{[D]\\ D:\ \text{not cycle}}} 
    \det \left(I_{2ml}-M_Dt^{|D|}\right),
\end{equation}
where $[D]$ is the conjugacy class of finite sequences of directed edges $D$
whose index are primitive words but $D$ is not a cycle.
By Lemma \ref{lem:45}, 
the right hand side of equation \eqref{eq:detLS}
is simplified to the product over prime cycles.
Furthermore, by Lemma \ref{lem:cycleeq},
the value of the determinant for each equivalence class is 
independent of the choice of the representative.
Thus we have the following:
\begin{equation*}
    \prod_{[C] \in S} 
    \det \left(I_{2ml}-M_Ct^{|C|}\right)
    =\prod_{[C] \in \mathcal{C}}
    \det \left(I_l-\rho(\phi(C))u^{\mathrm{cbc}(C)}t^{|C|} \right).
\end{equation*}
This completes this proof.
\end{proof}

We use the following theorem for the proof.

    \begin{thm}[\cite{Amitsur}]\label{thm.amitsur}
        Let $M_1, \dots, M_k \in M_m(\ZZ)$ for some $m \in \NN$.
        Let $L$ be the set of all Lyndon words over $\{1,\dots,k\}$.
        Then we have the following formula:
        \begin{equation*}
            \det (I-(M_1+\cdots+M_k)t)=\prod_{p \in L}\det \left(I-M_pt^{|p|}\right),
        \end{equation*}
        where the right hand side is the product over all Lyndon words $p \in L$, and for
        $ p=i_1 \cdots i_n\in L$, we define $M_p=M_{i_1}\cdots M_{i_n}$.
    \end{thm}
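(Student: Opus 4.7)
The plan is to take the formal logarithm of both sides in $1 + t\,\ZZ[[t]]$, reduce to an identity of trace generating functions, and then verify this identity by reorganizing the sum over words of length $n$ via the unique factorization into powers of primitive words, matched against the bijection between Lyndon words and conjugacy classes of primitive words (Remark \ref{rem:Lyncom}). The key analytic input is the standard formal identity $\log \det(I - X) = \mathrm{tr}\,\log(I - X) = -\sum_{n \geq 1} n^{-1}\mathrm{tr}(X^n)$, valid in the ring of formal power series over $M_m(\ZZ)$.

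Concretely, writing $S = M_1 + \cdots + M_k$ and $M_w = M_{i_1}\cdots M_{i_n}$ for a word $w = i_1\cdots i_n$ of length $n$, the left-hand side expands as
\begin{equation*}
    \log \det\bigl(I - St\bigr) = -\sum_{n \geq 1} \frac{t^n}{n}\,\mathrm{tr}(S^n) = -\sum_{n \geq 1} \frac{t^n}{n} \sum_{|w| = n} \mathrm{tr}(M_w),
\end{equation*}
while the right-hand side expands as
\begin{equation*}
    \sum_{p \in L} \log \det\bigl(I - M_p t^{|p|}\bigr) = -\sum_{p \in L}\sum_{r \geq 1} \frac{t^{r|p|}}{r}\,\mathrm{tr}(M_p^r).
\end{equation*}
Matching coefficients of $t^n$ reduces the theorem to the purely combinatorial identity
\begin{equation*}
    \sum_{|w|=n} \mathrm{tr}(M_w) = \sum_{r\ell = n} \ell \sum_{\substack{p \in L\\ |p|=\ell}} \mathrm{tr}(M_p^r).
\end{equation*}

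To establish this, I use that every word $w$ of length $n$ factors uniquely as $w = q^r$ for a primitive word $q$ of length $n/r$, so $\sum_{|w|=n}\mathrm{tr}(M_w) = \sum_{r\ell = n}\sum_{q\text{ primitive},\,|q|=\ell}\mathrm{tr}(M_q^r)$. By cyclicity of the trace, $\mathrm{tr}(M_q^r)$ depends only on the conjugacy class of $q$; since $q$ is primitive its $\ell$ cyclic rotations are pairwise distinct, so each conjugacy class contains exactly $\ell$ words; by Lemma \ref{lem:prilo} every element of such a class is primitive, and by Remark \ref{rem:Lyncom} exactly one of them is a Lyndon word. Substituting yields the displayed identity, after which exponentiation (using that $\exp\colon t\QQ[[t]] \to 1 + t\QQ[[t]]$ is a bijection and both sides lie in $1 + t\ZZ[[t]] \subset 1 + t\QQ[[t]]$) recovers the theorem.

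The main obstacle is the combinatorial bookkeeping: one must confirm that cyclic rotation acts freely on primitive words of length $\ell$ (so each conjugacy class has exactly $\ell$ members), and that the factorization $w = q^r$ is compatible with the conjugation action (so that conjugates of primitive words remain primitive). The first is immediate from primitivity — any coincidence of two cyclic rotations would exhibit $q$ as a proper power — and the second is supplied by Lemma \ref{lem:prilo}. The trace invariance $\mathrm{tr}(M_q^r) = \mathrm{tr}(M_{q'}^r)$ for cyclically rotated $q' = q_2 q_1$ from $q = q_1 q_2$ follows from $\mathrm{tr}(AB) = \mathrm{tr}(BA)$ applied to $M_q^r = M_{q_1}(M_{q_2} M_{q_1})^{r-1} M_{q_2}$. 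Once these pieces are in place the proof is a direct computation.
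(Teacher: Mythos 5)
Your proof is correct. Note first that the paper itself does not prove this statement: it is imported verbatim from Amitsur's paper \cite{Amitsur} and used as a black box, so there is no internal proof to compare against. Your argument --- taking the formal logarithm, using $\log\det(I-Xt)=-\sum_{n\ge 1}n^{-1}t^n\,\mathrm{tr}(X^n)$, expanding $\mathrm{tr}\bigl((M_1+\cdots+M_k)^n\bigr)=\sum_{|w|=n}\mathrm{tr}(M_w)$, and regrouping via the unique factorization $w=q^r$ into a primitive root together with the facts that a primitive word of length $\ell$ has exactly $\ell$ distinct rotations, that conjugates of primitive words are primitive, and that each conjugacy class contains exactly one Lyndon word --- is the standard modern proof of Amitsur's identity (the one that appears in the transfer-matrix and graph-zeta literature), and all the pieces you invoke are sound: the trace identity $\mathrm{tr}\bigl((M_{q_1}M_{q_2})^r\bigr)=\mathrm{tr}\bigl((M_{q_2}M_{q_1})^r\bigr)$ is exactly what is needed, the passage through $1+t\,\QQ[[t]]$ and back is legitimate since $\exp$ and $\log$ are mutually inverse there and both sides of the theorem lie in $1+t\,\ZZ[[t]]$, and the infinite product over Lyndon words is well defined as a formal power series because only finitely many Lyndon words have length at most any fixed $n$ (a point worth stating explicitly, though it is implicit in your coefficient-matching). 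Amitsur's original argument is more algebraic; your route is arguably better suited to this paper since it reuses Lemma \ref{lem:prilo} and Remark \ref{rem:Lyncom}, which the paper already establishes for the proof of Lemma \ref{lem:lynpri}.
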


We now prove the following proposition.

    \begin{prop}\label{Prop:hashimoto}
Let us adopt the same assumption as in Theorem \ref{thm11}.
Let us adopt the same assumption as Theorem \ref{thm41},
and let $\rho$ be a representation of $\Gamma$ with degree $l$.
Then the following formula holds:
        \begin{equation*}
            \zeta(B_H,\rho,\phi,u,t)^{-1}
            =\det \left(I-t(B+uJ)\right).
        \end{equation*}
    \end{prop}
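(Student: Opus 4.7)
The plan is to chain together Amitsur's identity (Theorem \ref{thm.amitsur}) with the two preparatory lemmas \ref{lem:45} and \ref{lem:lynpri}, with the $2m\ell \times 2m\ell$ matrices $M_1,\dots,M_{2m}$ from Lemma \ref{lem:45} serving as the bridge. The key geometric observation, which makes the whole construction work, is that by the very definition of $M_i$ (only the rows indexed by $\ell(i-1)+1,\dots,\ell i$ are nonzero, and on those rows it agrees with $B+uJ$) one has the clean identity
\begin{equation*}
M_1+M_2+\cdots+M_{2m}=B+uJ.
\end{equation*}

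Concretely, I would proceed in three steps. First, apply Theorem \ref{thm.amitsur} to the matrices $M_1,\dots,M_{2m} \in M_{2m\ell}(\mathbb{C})$ with $k=2m$, obtaining
\begin{equation*}
\det\bigl(I_{2m\ell}-t(B+uJ)\bigr)
=\det\bigl(I_{2m\ell}-t(M_1+\cdots+M_{2m})\bigr)
=\prod_{p \in L}\det\bigl(I_{2m\ell}-M_p t^{|p|}\bigr),
\end{equation*}
where $L$ is the set of Lyndon words over $\{1,\dots,2m\}$ and $M_p=M_{i_1}\cdots M_{i_n}$ for $p=i_1\cdots i_n$. Second, apply Lemma \ref{lem:lynpri} to rewrite this product over Lyndon words as a product over equivalence classes of prime cycles in $R(B_H)$:
\begin{equation*}
\prod_{p \in L}\det\bigl(I_{2m\ell}-M_p t^{|p|}\bigr)
=\prod_{[C]\in \mathcal{C}}\det\bigl(I_\ell-\rho(\phi(C))u^{\mathrm{cbc}(C)}t^{|C|}\bigr).
\end{equation*}
Third, recognize the right-hand side as $\zeta(B_H,\rho,\phi,u,t)^{-1}$ directly from Definition \ref{def:L-funct.} (noting that the substitution $\sqrt{t}\mapsto t$ is absorbed in passing from $\zeta(H,\rho,\phi,u,t)$ to $\zeta(B_H,\rho,\phi,u,t)$, so we work with the $B_H$-version as written).

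The proof is really an assembly rather than a computation, since all the substantive content has been isolated into Lemmas \ref{lem:45} and \ref{lem:lynpri} and the cited theorem of Amitsur. The only place where one needs to be slightly careful is verifying the additive decomposition $\sum_i M_i = B+uJ$, which one reads off row by row from the construction of the $M_i$: row $\ell(i-1)+j$ of $M_i$ equals row $\ell(i-1)+j$ of $B+uJ$, and all other rows of $M_i$ vanish, so summing over $i=1,\dots,2m$ reproduces $B+uJ$ exactly. Once this is in hand, no further calculation is needed and the proposition follows by concatenating the three displayed equalities above.
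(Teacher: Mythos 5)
Your proposal is correct and follows essentially the same route as the paper: both proofs observe $M_1+\cdots+M_{2m}=B+uJ$, invoke Theorem \ref{thm.amitsur} on the $M_i$, and then use Lemmas \ref{lem:45} and \ref{lem:lynpri} to identify the Lyndon-word product with the product over equivalence classes of prime cycles defining $\zeta(B_H,\rho,\phi,u,t)^{-1}$. No gaps.
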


\begin{proof}
Let  
$M_1,\dots,M_{2m}$ be $2ml \times 2ml$ matrices such that for each $i$, 
the rows indexed by $l(i-1)+1, \dots,li$ of $M_i$ 
are equal to 
corresponding rows 
of $B+uJ$,
and all other rows are zero.
It is clear that $M_1+\cdots +M_{2m}=B+uJ$.

By Lemma \ref{lem:45}, 
for any sequence of directed edges $C=(e_{i_1},\dots, e_{i_k})$, we have 
\begin{equation*}
    \det (I_{2ml}-M_{i_1}\cdots M_{i_k}t^{|C|})=
    \begin{cases}
        \det \left(I_l-\rho(\phi(C))u^{\mathrm{cbc}(C)}t^{|C|}\right),
        & \text{if} \  C \text{ is a cycle},\\
        1 & \text{otherwise.}
    \end{cases}
\end{equation*}

Applying Theorem \ref{thm.amitsur} to the matrices $M_1,\dots,M_{2m}$
and by Lemma \ref{lem:lynpri}, we obtain:
\begin{align*}
    \det(I_{2ml}-(B+uJ)t)
    &= \det (I_{2ml}-(M_1+\cdots +M_{2m})t)\\
    &=\prod_{[C]\  \mathrm{prime \ cycle}} 
    \det \left(I_l-\rho(\phi(C))u^{\mathrm{cbc}(C)}t^{|C|}\right)\\
    &=\zeta(B_H,\rho,\phi,u,t)^{-1}.
\end{align*}

\end{proof}

Under the same assumption as in Theorem \ref{thm11},
    for our convenience, we introduce the following
    $2ml \times nl$ matrices, where 
    each entry $k_{i,\beta},l_{i,\beta}$ is a $l \times l$ matrix:
    \begin{align*}
            K&=(k_{\alpha, \ \beta })_{1\le \alpha \le 2m, \ 1 \le \beta \le n},\quad
            k_{\alpha, \ \beta }=
            \begin{cases}
                \rho(\phi(e_\alpha)) & \text{if} \  t(e_\alpha)=v_\beta,\\
                0_l& \text{otherwise},
            \end{cases}\\
            L&=(l_{\alpha, \ \beta })_{1\le \alpha \le 2m, \ 1 \le \beta \le n},\quad
            l_{\alpha, \ \beta }=
            \begin{cases}
                I_l & \text{if} \ o(e_\alpha)=v_\beta,\\
                0_l& \text{otherwise}.
            \end{cases}
        \end{align*}

\begin{lem}\label{lem:47}
Under the same assumption as in Theorem \ref{thm11},
the following holds:
    \begin{equation*}
        K\ {}^tL=B+J.
    \end{equation*}
\end{lem}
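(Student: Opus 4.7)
The plan is a direct block computation: I view $K$ as a $(2m)\times n$ array of $l\times l$ blocks, and ${}^tL$ as an $n\times 2m$ array of $l\times l$ blocks, then compare the $(\alpha,\beta)$-block of $K\,{}^tL$ with that of $B+J$ case by case.

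First I would observe that every block of $L$ lies in $\{I_l,0_l\}$, both of which are symmetric, so block transposition of $L$ amounts only to swapping the block indices: the $(\gamma,\alpha)$-block of ${}^tL$ equals $l_{\alpha,\gamma}$. Hence
\begin{equation*}
(K\,{}^tL)_{\alpha,\beta} \;=\; \sum_{\gamma=1}^{n} k_{\alpha,\gamma}\, l_{\beta,\gamma}.
\end{equation*}

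Next I would analyze when a term $k_{\alpha,\gamma}\, l_{\beta,\gamma}$ is nonzero. By the definitions, $k_{\alpha,\gamma}\ne 0_l$ forces $t(e_\alpha)=v_\gamma$ and $l_{\beta,\gamma}\ne 0_l$ forces $o(e_\beta)=v_\gamma$. Since the vertices of $B_H$ are distinct, there is at most one $\gamma$ with $v_\gamma=t(e_\alpha)$, and the product is nonzero precisely when this $v_\gamma$ also equals $o(e_\beta)$. In that case the product collapses to $\rho(\phi(e_\alpha))\cdot I_l = \rho(\phi(e_\alpha))$. Thus
\begin{equation*}
(K\,{}^tL)_{\alpha,\beta} \;=\;
\begin{cases}
\rho(\phi(e_\alpha)) & \text{if } t(e_\alpha)=o(e_\beta),\\
0_l & \text{otherwise.}
\end{cases}
\end{equation*}

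Finally I would compare this with $(B+J)_{\alpha,\beta}=b_{\alpha,\beta}+j_{\alpha,\beta}$. The key observation is that $e_\alpha=e_\beta^{-1}$ automatically implies $t(e_\alpha)=o(e_\beta)$, and conversely the two conditions ``$t(e_\alpha)=o(e_\beta)$ with $e_\alpha\neq e_\beta^{-1}$'' and ``$e_\alpha=e_\beta^{-1}$'' are mutually exclusive and together exhaust the condition $t(e_\alpha)=o(e_\beta)$. So exactly one of $b_{\alpha,\beta}, j_{\alpha,\beta}$ equals $\rho(\phi(e_\alpha))$ (with the other zero) when $t(e_\alpha)=o(e_\beta)$, and both vanish otherwise. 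This matches $(K\,{}^tL)_{\alpha,\beta}$ block by block, completing the proof.

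There is essentially no obstacle: the only mild subtlety is the treatment of the block transpose, which is harmless here because the blocks of $L$ are symmetric. The argument is routine bookkeeping.
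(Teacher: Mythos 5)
Your proof is correct and follows essentially the same route as the paper: an entrywise (blockwise) comparison of $\sum_{\gamma}k_{\alpha\gamma}\,{}^t l_{\beta\gamma}$ with $b_{\alpha\beta}+j_{\alpha\beta}$, using uniqueness of the vertex $v_\gamma=t(e_\alpha)=o(e_\beta)$ and the fact that the conditions defining $B$ and $J$ are mutually exclusive and together exhaust $t(e_\alpha)=o(e_\beta)$. Your explicit remark that the block transpose of $L$ is harmless because its blocks are $I_l$ or $0_l$ is a minor clarification the paper leaves implicit.
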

\begin{proof}
It suffices to show that 
    for all $(\alpha, \ \beta  ), \ 1 \le \alpha, \ \beta   \le 2m$,
    \begin{equation}\label{eq:lem48}
        \sum_{s=1}^{n}k_{\alpha s} \ {}^tl_{\beta s}=b_{\alpha\beta }+j_{\alpha\beta }
    \end{equation}
    holds.
    Each entry $k_{\alpha s}{}^tl_{\beta s} \neq 0_l$ if and only if both
    $k_{\alpha s} \neq 0_l$ and $l_{\beta s} \neq 0_l$.
    This is because any non zero entry is an image of representation $\rho$,
    that is an 
    invertible matrix from
    $\mathrm{GL}_l(\CC)$, so the product of 
    two such entries cannot be the zero matrix.
    It is when the vertex $v_s$ is the terminus vertex of $e_\alpha $ and 
    the origin vertex of $e_\beta $.
    It is denoted by $t(e_\alpha )=v_s=o(e_\beta )$.
     This condition implies that the terminus vertex of edge $e_\alpha $ is the same as the origin vertex of edge $e_\beta $.
    If such a vertex $v_s$ exists, it is unique
    since the origin and terminus vertices of any given edge are unique.
    Thus, when there exists a vertex $v_s$ with $t(e_\alpha )=v_s=o(e_\beta )$,
    the sum of the left hand side of equation \eqref{eq:lem48}
    is not zero, and it 
    simplifies to 
    $k_{\alpha s}{}^tl_{\beta s} =\rho(\phi(e_\alpha ))I_l=\rho(\phi(e_\alpha ))$
    by the definition of $K$ and $L$.
    Therefore the left hand side of equation \eqref{eq:lem48}
    can be summarized to as follows:
    \begin{equation*}
    \sum_{s=1}^{n}k_{\alpha s}{}^tl_{\beta s}=
        \begin{cases}
            \rho(\phi(e_\alpha )), & 
            \text{if}\ t(e_\alpha )=o(e_\beta ),\\
            0_l, & \text{otherwise}.
        \end{cases}
    \end{equation*}
    Next, we consider the right hand side of equation \eqref{eq:lem48}. 
    The $(\alpha ,\beta )$-th entry of
    the sum on the right hand side of equation \eqref{eq:lem48}   
    is not zero if and only if $t(e_i)=o(e_\beta )$.
    If $t(e_\alpha )=o(e_\beta ) $ and $ e_\alpha  \neq e_\beta ^{-1}$ hold,
    $b_{\alpha \beta } \neq 0_l$ and $j_{\alpha \beta }=0_l$.
    If $t(e_\alpha )=o(e_\beta ) $ and $e_\alpha =e_\beta^{-1}$ hold, 
    $b_{\alpha \beta } = 0_l$ and $j_{\alpha \beta }\neq 0_l$.
    However, when it satisfies $t(e_\alpha )=o(e_\beta )$, 
    whether $e_\alpha =e_\beta ^{-1}$ or $e_\alpha  \neq e_\beta ^{-1}$,
    the $(\alpha ,\beta )$-th entry becomes $\rho(\phi(e_\alpha ))$.
    Therefore the right hand side of equation \eqref{eq:lem48}
    can be summarized as follows:
    \begin{equation*}
        b_{\alpha \beta }+j_{\alpha \beta }=
        \begin{cases}
            \rho(\phi(e_\alpha )), &
            \text{if}\ t(e_\alpha )=o(e_\beta ),\\
            0_l, & \text{otherwise}.
        \end{cases}
    \end{equation*}
    Thus both sides of equation \eqref{eq:lem48} are the same.
    This completes the proof.
\end{proof}

\begin{lem}\label{lem:48}
Under the same assumption as in Theorem \ref{thm11},
the following holds:
    \begin{equation*}
        {}^tLK=\sum_{g \in \Gamma}A(B_H)_g \otimes \rho(g).
    \end{equation*}
\end{lem}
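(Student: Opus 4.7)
The plan is to verify the identity block by block, comparing $l \times l$ blocks on both sides and showing that they agree. The argument is parallel in spirit to Lemma \ref{lem:47}: the right-hand side is sparse, with a nonzero block exactly when a suitable directed edge in $R(B_H)$ exists, and the left-hand side picks out the same condition via the product $l_{\alpha,\beta} k_{\alpha,\gamma}$.

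First, I would view ${}^tL K$ as an $n \times n$ block matrix whose $(\beta, \gamma)$-th block (each block of size $l \times l$) is
\[
    ({}^tL K)_{\beta, \gamma} = \sum_{\alpha = 1}^{2m} l_{\alpha, \beta}\, k_{\alpha, \gamma}.
\]
Unpacking the definitions of $K$ and $L$, the factor $l_{\alpha, \beta}$ is nonzero exactly when $o(e_\alpha) = v_\beta$ (in which case it equals $I_l$), and $k_{\alpha, \gamma}$ is nonzero exactly when $t(e_\alpha) = v_\gamma$ (in which case it equals $\rho(\phi(e_\alpha))$). Since $B_H$ is a simple graph, at most one directed edge $e_\alpha$ satisfies both $o(e_\alpha) = v_\beta$ and $t(e_\alpha) = v_\gamma$, so the sum collapses: if such an edge exists, $({}^tL K)_{\beta,\gamma} = \rho(\phi(e_\alpha))$, and otherwise it vanishes.

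Next, I would compute the $(\beta, \gamma)$-th $l \times l$ block of the right-hand side. Using the block convention for the Kronecker product (Definition \ref{def:kronecker}), we have
\[
    \left( \sum_{g \in \Gamma} A(B_H)_g \otimes \rho(g) \right)_{\beta, \gamma}
    = \sum_{g \in \Gamma} a_{\beta \gamma}^{(g)}\, \rho(g).
\]
By Definition \ref{def:38}, $a_{\beta\gamma}^{(g)} = 1$ exactly when there is a directed edge $e = (v_\beta, v_\gamma) \in E(R(B_H))$ with $\phi(e) = g$, and is $0$ otherwise. Thus at most one term in the sum is nonzero, namely the one with $g = \phi(e)$, and its contribution is precisely $\rho(\phi(e))$.

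Comparing the two computations, both sides vanish unless a unique directed edge $e_\alpha = (v_\beta, v_\gamma)$ exists in $R(B_H)$, and in that case both equal $\rho(\phi(e_\alpha))$. This establishes block-wise equality and completes the proof. There is no real obstacle here beyond carefully aligning the indexing conventions for the Kronecker product; the only point that deserves a sentence is the simplicity of $B_H$ (Remark \ref{rem:bipar}), which guarantees the uniqueness of the edge $e_\alpha$ and so lets the two sums collapse to a single matrix.
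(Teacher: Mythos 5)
Your proposal is correct and follows essentially the same route as the paper's own proof: a block-by-block comparison in which both sides are shown to be nonzero exactly when a (necessarily unique, by simplicity of $B_H$) directed edge $(v_\beta, v_\gamma)$ exists in $E(R(B_H))$, in which case both blocks equal $\rho(\phi(e))$. The only cosmetic difference is your observation that the transpose on the blocks of $L$ is harmless because each block is $I_l$ or $0_l$, which the paper leaves implicit.
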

\begin{proof}
    It suffices to show that 
    for all $(\alpha ,\beta ), \ 1 \le \alpha ,\beta  \le n$,
    \begin{equation}\label{eq:lem49}
        \sum_{s=1}^{2m}{}^tl_{s\alpha }k_{s\beta }
        =\sum_{g \in \Gamma}a_{\alpha \beta }^{(g)} \rho(g)
    \end{equation}
    holds.
    Each entry ${}^tl_{s\alpha }k_{s\beta } \neq 0_l$ if and only if 
    $l_{s\alpha } \neq 0_l$ and  $k_{s\beta } \neq 0_l$.
    This is because any non zero entry is an image of representation $\rho$,
    that is an 
    invertible matrix from
    $\mathrm{GL}_l(\CC)$, so the product of 
    two such entries cannot be the zero matrix.
    It is when
    $o(e_s)=v_\alpha , \ t(e_s)=v_\beta $.
    This condition implies $e_s=(v_\alpha ,v_\beta )$.
    If such an edge $e_s$ exists, it is unique 
    since the origin and terminus vertices 
    of any given directed edge are unique.
    When $e_s=(v_\alpha ,v_\beta )$ holds,
    the sum of 
    the left hand side of equation \eqref{eq:lem49} 
    simplifies to 
    ${}^tl_{s\alpha }k_{s\beta } =\rho(\phi(e_s))=\rho(\phi(v_\alpha ,v_\beta ))$.
    Therefore the left hand side of equation \eqref{eq:lem49}
    can be summarized as follows:
    \begin{equation*}
    \sum_{s=1}^{2m}{}^tl_{s\alpha }k_{s\beta }=
        \begin{cases}
            \rho(\phi(v_\alpha ,v_\beta )), & 
            \text{if}\ (v_\alpha ,v_\beta )\in E(R(B_H)),\\
            0_l, & \text{otherwise}.
        \end{cases}
    \end{equation*}
    Next, we consider the right hand side of equation \eqref{eq:lem49}.
    By definition, $\rho(g) \neq 0_l$ for all  $g \in \Gamma$.
    The condition $a_{\alpha\beta}=1$ is equivalent to the existence 
    of a unique element $g \in \Gamma$ such that $a_{\alpha \beta }^{(g)}=1$.
    Thus 
    the right hand side of equation \eqref{eq:lem49} 
    is simplified to at most one term, and 
    it is not zero if and only if 
    $a_{\alpha \beta }^{(g)} \neq 0$ for some $g \in \Gamma$.
    This happens when there exists the edge $e=(v_\alpha ,v_\beta )$ such that $\phi(e)=g$.
    Since the bipartite graph $B_H$ is simple (cf. Remark \ref{rem:bipar}),
    the edge $e=(v_\alpha ,v_\beta )$ is unique if it exists.
    If such an edge exists, the summation of
    the right hand side of equation \eqref{eq:lem49}
    simplifies to $1 \cdot \rho(g)=\rho(\phi(e))=\rho(\phi(v_\alpha ,v_\beta ))$.
    Therefore the right hand side of equation \eqref{eq:lem49}
    can be summarized as follows:
    \begin{equation*}
        \sum_{g \in \Gamma}a_{\alpha \beta }^{(g)} \rho(g)=
        \begin{cases}
            \rho(\phi(v_\alpha ,v_\beta )), & \text{if} \ (v_\alpha ,v_\beta ) \in E(R(B_H)),\\
            0_l, & \text{otherwise}.
        \end{cases}
    \end{equation*}
    Thus both sides of equation \eqref{eq:lem49} are the same.
    This completes the proof.
\end{proof}

\begin{lem}\label{lem:49}
Let us adopt the same assumption as in Theorem \ref{thm11}.
Let $\rho$ be a unitary representation of $\Gamma$ with degree $l$.
Then the following holds:
    \begin{equation*}
        {}^t\overline{K}K =D(B_H) \otimes I_l,
    \end{equation*}
    where $D(B_H)$ is the degree matrix defined in Definition \ref{def:2.16}.
\end{lem}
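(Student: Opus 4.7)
The plan is to compute the $(\alpha,\beta)$-block of ${}^t\overline{K}K$ directly using the block structure of $K$ and then invoke unitarity of $\rho$ to collapse each surviving term to $I_l$.

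First, I would write out the block entry
\[
\bigl({}^t\overline{K}K\bigr)_{\alpha\beta} = \sum_{s=1}^{2m} {}^t\overline{k_{s\alpha}}\, k_{s\beta},
\]
where each $k_{s\beta}$ is either $\rho(\phi(e_s))$ (when $t(e_s)=v_\beta$) or $0_l$ (otherwise). A term in this sum is nonzero only when both factors are nonzero, which forces $t(e_s)=v_\alpha$ and $t(e_s)=v_\beta$ simultaneously.

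Next I would split into two cases. If $\alpha \neq \beta$, the two terminus conditions are incompatible since every directed edge has a unique terminus, so every term vanishes and the block is $0_l$. If $\alpha = \beta$, the surviving indices are precisely those $s$ with $t(e_s)=v_\alpha$, whose number is $\deg(v_\alpha)$ by the definition of the degree matrix applied to $B_H$. For each such $s$, the contribution is $\rho(\phi(e_s))^{*}\rho(\phi(e_s))$, which equals $I_l$ because $\rho$ is unitary. Thus the diagonal block is $\deg(v_\alpha)\,I_l$.

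Assembling the blocks, ${}^t\overline{K}K$ is block diagonal with $\alpha$-th diagonal block $\deg(v_\alpha)\,I_l$, which is precisely $D(B_H)\otimes I_l$ under the convention of Definition \ref{def:kronecker} (the $(\alpha,\beta)$-block is $D(B_H)_{\alpha\beta} I_l$). There is no real obstacle here: the only nontrivial input is the unitarity hypothesis on $\rho$, which is exactly what is needed to turn each diagonal contribution into the identity and thus recover the degree count. The argument is parallel in structure to Lemmas \ref{lem:47} and \ref{lem:48}, differing only in that unitarity is used in place of the indexing gymnastics between directed edges and vertices.
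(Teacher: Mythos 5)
Your proof is correct and follows essentially the same route as the paper: compute the $(\alpha,\beta)$-block of ${}^t\overline{K}K$, observe that a nonzero term forces $t(e_s)=v_\alpha=v_\beta$ so only diagonal blocks survive, count the $\deg(v_\alpha)$ surviving edges, and use unitarity to reduce each term to $I_l$. No gaps.
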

\begin{proof}
    It suffices to show that for all $(\alpha ,\beta ), \ 1 \le \alpha ,\beta  \le n$,
    \begin{equation}\label{eq:lem410}
        \sum_{s=1}^{2m}{}^t\overline{k_{s\alpha }}k_{s\beta }=\delta_{\alpha \beta }\deg (v_\alpha ) I_l
    \end{equation}
    holds. The entry ${}^t\overline{k_{s\alpha }}k_{s\beta } \neq 0_l$ if and only if
    $k_{s\alpha } \neq 0_l$ and $k_{s\beta } \neq 0_l$.
    This is because any non zero entry is an image of representation $\rho$,
    that is an 
    invertible matrix from
    $\mathrm{GL}_l(\CC)$, so the product of 
    two such entries cannot be the zero matrix.
    It is when 
    the terminus of the directed edge $e_s$ is $v_\alpha $ and $v_\beta $,
    which denotes 
    $t(e_s)=v_\alpha , \ t(e_s)=v_\beta $.
    Since a directed edge has a unique terminus vertex, 
    this implies that $v_\alpha =v_\beta $, which means that $\alpha =\beta $.
    The number of directed edges $e\in E(R(B_H))$ with $t(e)=v_\alpha $ is 
    $\deg(v_\alpha )$ since $R(B_H)$ is the symmetric digraph of $B_H$.
    Therefore, when $t(e_s)=v_\alpha  $ and $ \alpha =\beta $ hold,
    the sum of the left hand side of equation \eqref{eq:lem410}
    simplifies to 
    \begin{equation*}
        \sum_{\substack{s, \\ t(e_s)=v_\alpha }}
        {}^t\overline{\rho(\phi(e_s))}\rho(\phi(e_s)).
    \end{equation*}
    Each term 
    ${}^t\overline{\rho(\phi(e_s))}\rho(\phi(e_s))$
    becomes the identity matrix $I_l$ since
    $\rho$ is unitary.
    Therefore the left hand side of equation \eqref{eq:lem410}
    can be summarized as follows:
    \begin{equation*}
        \sum_{s=1}^{2m}{}^t\overline{k_{s\alpha }}k_{s\beta }=
            \delta_{\alpha \beta }\deg(v_\alpha )I_l .
    \end{equation*}
\end{proof}

\begin{lem}\label{lem:410}
Let us adopt the same assumption as in Theorem \ref{thm11}.
Let $\rho$ be a unitary representation of $\Gamma$ with degree $l$.
Then the following holds:
    \begin{equation*}
        K{}^t\overline{K}=BJ+I_{2ml}.
    \end{equation*}
\end{lem}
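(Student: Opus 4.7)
The plan is to prove the identity block by block, viewing both sides as $2m\times 2m$ block matrices with $l\times l$ blocks and comparing the $(\alpha,\beta)$-block for each $1\le\alpha,\beta\le 2m$. This parallels the entry-by-entry arguments of Lemmas~\ref{lem:47}--\ref{lem:49}, and the key ingredient, as in Lemma~\ref{lem:49}, will be the unitarity of $\rho$, which converts conjugate transposes of $\rho$-values into inverses.

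First I would compute $(K\,{}^t\overline{K})_{\alpha\beta}=\sum_{s=1}^{n}k_{\alpha s}\,{}^t\overline{k_{\beta s}}$. The $\alpha$-th block-row of $K$ has a unique nonzero block $\rho(\phi(e_\alpha))$ located in the column indexed by $t(e_\alpha)$, and similarly for $\beta$. Hence the block vanishes unless $t(e_\alpha)=t(e_\beta)$, and when these termini coincide it reduces to $\rho(\phi(e_\alpha))\,{}^t\overline{\rho(\phi(e_\beta))}$. Invoking unitarity together with $\phi(e^{-1})=\phi(e)^{-1}$, I would rewrite this as $\rho(\phi(e_\alpha))\rho(\phi(e_\beta^{-1}))$.

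Next I would compute $(BJ)_{\alpha\beta}$. The $\beta$-th block-column of $J$ has a unique nonzero block $\rho(\phi(e_\beta^{-1}))$ at the row $\beta'$ with $e_{\beta'}=e_\beta^{-1}$, so $(BJ)_{\alpha\beta}=b_{\alpha\beta'}\rho(\phi(e_\beta^{-1}))$. By the definition of $B$, the factor $b_{\alpha\beta'}$ is nonzero exactly when $t(e_\alpha)=o(e_{\beta'})=t(e_\beta)$ and the backtracking-forbidding condition $e_\alpha\ne e_{\beta'}^{-1}=e_\beta$ is met, in which case $(BJ)_{\alpha\beta}=\rho(\phi(e_\alpha))\rho(\phi(e_\beta^{-1}))$, exactly matching the $K$-side.

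The main subtlety, and the last step of the plan, is the diagonal case $\alpha=\beta$. Here the condition $e_\alpha\ne e_\beta$ fails, so $(BJ)_{\alpha\alpha}=0_l$, and the discrepancy is precisely what the identity term $I_{2ml}$ is there to absorb. To verify that this works, I would use that on the $K$-side the coincidence $t(e_\alpha)=t(e_\beta)$ is automatic, and the block evaluates by unitarity to $\rho(\phi(e_\alpha))\rho(\phi(e_\alpha^{-1}))=\rho(\phi(e_\alpha)\phi(e_\alpha)^{-1})=I_l$, which matches the contribution of $I_{2ml}$. Combining the three cases $\alpha=\beta$; $\alpha\ne\beta$ with $t(e_\alpha)=t(e_\beta)$; and $\alpha\ne\beta$ with $t(e_\alpha)\ne t(e_\beta)$ then yields the identity $K\,{}^t\overline{K}=BJ+I_{2ml}$.
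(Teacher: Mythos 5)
Your proposal is correct and follows essentially the same route as the paper: a block-by-block comparison of $(K\,{}^t\overline{K})_{\alpha\beta}$ and $(BJ)_{\alpha\beta}+\delta_{\alpha\beta}I_l$, using uniqueness of the terminus of each directed edge to reduce both sums to a single term, unitarity of $\rho$ together with $\phi(e^{-1})=\phi(e)^{-1}$ to identify ${}^t\overline{\rho(\phi(e_\beta))}$ with $\rho(\phi(e_\beta^{-1}))$, and the observation that the vanishing of $(BJ)_{\alpha\alpha}$ (from the condition $e_\alpha\neq e_{v}^{-1}$ in $B$) is exactly compensated by the identity term. No gaps; the argument matches the paper's proof.
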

\begin{proof}
    It suffices to show that for all $(\alpha ,\beta ), \ 1 \le \alpha ,\beta  \le 2m$,
    \begin{equation}\label{eq:lem411}
        \sum_{u=1}^n k_{\alpha u}{}^t\overline{k_{\beta u}}= 
        \sum_{v=1}^{2m}b_{\alpha v}j_{v\beta }+\delta_{\alpha \beta }I_l
    \end{equation}
    holds.
    The entry $k_{\alpha u}{}^t\overline{k_{\beta u}} \neq 0_l$ if and only if 
    $k_{\alpha u} \neq 0_l$ and $k_{\beta u} \neq 0_l$.
    This is because any non zero entry is an image of representation $\rho$,
    that is an 
    invertible matrix from
    $\mathrm{GL}_l(\CC)$, so the product of 
    two such entries cannot be the zero matrix.
    It is when 
    $t(e_\alpha )=v_u=t(e_\beta )$.
    Assume that the left hand side of equation \eqref{eq:lem411}
    is non-zero.
    Since the terminus vertex of any given directed edge is unique,
    the sum of the left hand side simplifies to a
    single term.
    If $\alpha \neq \beta $, the term is $\rho(\phi(e_\alpha )){}^t \overline{\rho(\phi(e_\beta ))}$.
    If $\alpha =\beta $, the term is $\rho(\phi(e_\alpha )){}^t \overline{\rho(\phi(e_\alpha ))}$.
    However, since $\rho$ is unitary, 
    then it turns out to be the identity matrix $I_l$.
    Therefore, the left hand side of equation \eqref{eq:lem411}
    can be summarized as follows:
    \begin{equation*}
        \sum_{u=1}^n k_{\alpha u}{}^t\overline{k_{\beta u}}= 
        \begin{cases}
            \rho(\phi(e_\alpha )){}^t \overline{\rho(\phi(e_\beta ))}, &
            \text{if}\ \alpha \neq \beta , \ t(e_\alpha )=t(e_\beta ),\\
            I_l, & \text{if}\  \alpha =\beta ,\\
            0_l, & \text{otherwise}.
        \end{cases}
    \end{equation*}
    Next we consider the right hand side of equation \eqref{eq:lem411}.
    The entry $b_{\alpha v}j_{v\beta } \neq 0$ if and only if $t(e_\alpha )=o(e_v),\ e_\alpha  \neq e_{v}^{-1}, \ e_v=e_{\beta }^{-1}$.
    Since the inverse of a directed edge is unique,
    if the edge $e_v$ that satisfies $t(e_\alpha )=o(e_v),\ e_\alpha  \neq e_{v}^{-1}, \ e_v=e_{\beta }^{-1}$ for some $(\alpha ,\beta )$ exists then it is unique.
    If such an edge $e_v$ exists, then $(\alpha ,\beta )$-th entry of 
    $BJ$ on the right hand side of equation \eqref{eq:lem411}
    simplifies as follows:
    \begin{align*}
        \rho(\phi(e_\alpha ))\rho(\phi(e_v))
        &=\rho(\phi(e_\alpha ))\rho(\phi(e_\beta ^{-1}))\\
        &=\rho(\phi(e_\alpha )){}^t \overline{\rho(\phi(e_\beta ))},
    \end{align*}
    since $\rho$ is unitary.
    Therefore the right hand side of equation \eqref{eq:lem411}
    can be summarized as follows:
    \begin{equation*}
        \sum_{v=1}^{2m}b_{\alpha v}j_{v\beta }+\delta_{\alpha \beta }I_l=
        \begin{cases}
            \rho(\phi(e_\alpha )){}^t \overline{\rho(\phi(e_\beta ))}
            &\text{if}\ 
            t(e_\alpha )=t(e_\beta ), \ \alpha  \neq \beta ,\\
            I_l, & \text{if}\ \alpha =\beta ,\\
            0_l, & \text{otherwise}.
        \end{cases}
    \end{equation*}
    Thus both sides of equation \eqref{eq:lem411} are the same.
\end{proof}

\begin{lem}\label{lem:411}
Under the same assumption as in Theorem \ref{thm11}, the following holds:
    \begin{equation*}
        J^2=I_{2ml}.
    \end{equation*}
\end{lem}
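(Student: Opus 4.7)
The plan is to compute the $(\alpha,\beta)$-th $l\times l$ block of $J^{2}$ directly from the definition of $J$ and show that it equals $\delta_{\alpha\beta}I_{l}$. Writing $J^{2}=(c_{\alpha\beta})_{1\le\alpha,\beta\le 2m}$ as a $2m\times 2m$ array of $l\times l$ blocks, we have
\begin{equation*}
c_{\alpha\beta}=\sum_{s=1}^{2m}j_{\alpha s}j_{s\beta}.
\end{equation*}

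First I would determine when a single summand $j_{\alpha s}j_{s\beta}$ can be nonzero. Since every nonzero block of $J$ equals some $\rho(\phi(e_{\alpha}))\in\mathrm{GL}_{l}(\mathbb{C})$, the product of two nonzero blocks is again nonzero. So $j_{\alpha s}j_{s\beta}\neq 0_{l}$ exactly when both $e_{\alpha}=e_{s}^{-1}$ and $e_{s}=e_{\beta}^{-1}$ hold. The second condition determines $s$ uniquely (since the inverse of a given directed edge is unique), and substituting it into the first gives $e_{\alpha}=(e_{\beta}^{-1})^{-1}=e_{\beta}$, i.e. $\alpha=\beta$. Hence $c_{\alpha\beta}=0_{l}$ whenever $\alpha\neq\beta$.

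Next I would handle the diagonal case $\alpha=\beta$. Taking the unique index $s$ with $e_{s}=e_{\alpha}^{-1}$, the sum collapses to the single term
\begin{equation*}
c_{\alpha\alpha}=j_{\alpha s}j_{s\alpha}=\rho(\phi(e_{\alpha}))\,\rho(\phi(e_{s})).
\end{equation*}
Since $\phi$ is a permutation voltage assignment, $\phi(e_{s})=\phi(e_{\alpha}^{-1})=\phi(e_{\alpha})^{-1}$, and since $\rho$ is a group homomorphism this gives $c_{\alpha\alpha}=\rho(\phi(e_{\alpha}))\rho(\phi(e_{\alpha}))^{-1}=I_{l}$. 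Combining the two cases yields $J^{2}=I_{2ml}$.

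There is no real obstacle: the key observation is simply that the condition $e_{\alpha}=e_{s}^{-1}$ together with $e_{s}=e_{\beta}^{-1}$ forces $\alpha=\beta$ and picks out a unique $s$, after which the identity $\phi(e^{-1})=\phi(e)^{-1}$ built into the definition of a permutation voltage assignment does all the work. Note also that unitarity of $\rho$ is not needed here, only that $\rho$ is a representation.
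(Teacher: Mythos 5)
Your proposal is correct and follows essentially the same route as the paper's proof: expand the $(\alpha,\beta)$-th block of $J^2$, observe that a summand $j_{\alpha s}j_{s\beta}$ is nonzero only when $e_\alpha=e_s^{-1}$ and $e_s=e_\beta^{-1}$ (forcing $\alpha=\beta$ and a unique $s$), and then use $\phi(e^{-1})=\phi(e)^{-1}$ together with the fact that $\rho$ is a homomorphism to collapse the surviving term to $I_l$. Your closing remark that unitarity of $\rho$ is not needed here is accurate and consistent with the paper, which also does not invoke it for this lemma.
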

\begin{proof}
It suffices to show that for all $(\alpha ,\beta ), \ 1 \le \alpha ,\beta  \le 2m$,
\begin{equation}\label{eq:lem412}
    \sum_{s=1}^{2m}j_{\alpha s}j_{s\beta }=I_l
\end{equation}
holds.
    Each entry $j_{\alpha s}j_{s\beta } \neq 0$ if and only if 
    $j_{\alpha s} \neq 0_l$ and $j_{s\beta } \neq 0_l$.
    This is because any non zero entry is an image of representation $\rho$,
    that is an 
    invertible matrix from
    $\mathrm{GL}_l(\CC)$, so the product of 
    two such entries cannot be the zero matrix.
    It is when 
    $e_\alpha =e_s^{-1}=e_\beta $. 
    This condition holds when $e_s=e_\alpha ^{-1}$ and $\alpha =\beta $ since the inverse of 
    any given directed edge is unique. Thus the summation of the left 
    hand side of equation \eqref{eq:lem412} simplifies to a single term
    and it becomes as follows;
    \begin{align*}
        j_{\alpha s}j_{s\beta } 
        &=\rho(\phi(e_\alpha ))\rho(\phi(e_s))\\
        &=\rho(\phi(e_\alpha ))\rho(\phi(e_\alpha ^{-1}))\\
        &=I_l.
    \end{align*}
    Therefore, the left hand side of equation \eqref{eq:lem412}
    can be summarized as follows:
    \begin{equation*}
        \sum_{s=1}^{2m}j_{\alpha s}j_{s\beta }=
        \begin{cases}
            I_l, & \text{if}\ \alpha =\beta ,\\
            0_l, & \text{otherwise}.
        \end{cases}
    \end{equation*}
    Thus, both sides of equation \eqref{eq:lem412} are the same.
\end{proof}

We present some lemmas that are useful for the proof of 
Proposition \ref{prop54}.
 
\begin{lem}\label{lem:412}
Under the same assumption as in Theorem \ref{thm11}, the following holds:
    \begin{equation*}
        \det (I_{2ml}-(1-u)tJ) 
        =(1-(1-u)^2t^2)^{ml}.
    \end{equation*}
\end{lem}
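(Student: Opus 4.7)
The plan is to exploit the structural fact that the only nonzero $l \times l$ blocks of $J$ occur at positions $(\alpha,\beta)$ with $e_\beta = e_\alpha^{-1}$. Using the labelling convention of Remark \ref{rem:24} applied to $B_H$, namely $e_{i+m} = e_i^{-1}$ for $1 \le i \le m$, the nonzero blocks of $J$ sit precisely at positions $(i, m+i)$ and $(m+i, i)$ for $1 \le i \le m$. After a simultaneous permutation of rows and columns that reorders the index set as $(1, m+1, 2, m+2, \dots, m, 2m)$, the matrix $J$ becomes block-diagonal,
\begin{equation*}
    P^{-1} J P = \mathrm{diag}(J_1, J_2, \dots, J_m),
    \qquad
    J_i = \begin{pmatrix} 0_l & \rho(\phi(e_i)) \\ \rho(\phi(e_i^{-1})) & 0_l \end{pmatrix},
\end{equation*}
for some permutation matrix $P$ (here we use that $B_H$ has no loops, so $e_i \neq e_i^{-1}$ and the pairs are disjoint).

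Next, I would compute $\det(I_{2l} - (1-u)t J_i)$ for each block using the standard block-Schur determinant formula. Since $\phi(e_i^{-1}) = \phi(e_i)^{-1}$ by the definition of a permutation voltage assignment, $\rho(\phi(e_i^{-1})) = \rho(\phi(e_i))^{-1}$, and the off-diagonal blocks are invertible. Applying the formula yields
\begin{equation*}
    \det(I_{2l} - (1-u)t J_i)
    = \det\bigl(I_l - (1-u)^2 t^2\, \rho(\phi(e_i))^{-1}\rho(\phi(e_i))\bigr)
    = (1 - (1-u)^2 t^2)^l.
\end{equation*}
Multiplying the $m$ block determinants then gives the desired identity $\det(I_{2ml} - (1-u)tJ) = (1-(1-u)^2 t^2)^{ml}$.

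As an alternative route, one can use the fact from Lemma \ref{lem:411} that $J^2 = I_{2ml}$, so the eigenvalues of $J$ lie in $\{+1, -1\}$. Since $B_H$ has no loops, $e_\alpha \neq e_\alpha^{-1}$ for every $\alpha$, so every diagonal block $j_{\alpha\alpha}$ is zero and hence $\operatorname{tr}(J) = 0$. This forces the $\pm 1$ eigenspaces to have equal dimension $ml$, and the determinant factors as $(1-(1-u)t)^{ml}(1+(1-u)t)^{ml}$, which again equals $(1-(1-u)^2 t^2)^{ml}$. There is no real obstacle here; the only point requiring care is the pairing of each edge with its inverse under the labelling convention, together with the use of no-loops hypothesis to ensure that pairing is a true involution without fixed points.
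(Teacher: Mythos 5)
Your proof is correct, and both of your routes genuinely differ from the paper's. The paper introduces the strictly upper block-triangular matrix $P$ whose only nonzero blocks are $\rho(\phi(e_\alpha))$ at positions $(\alpha,\alpha+m)$ for $1\le\alpha\le m$, notes that $\det(I_{2ml}+(1-u)tP)=1$, and then multiplies: the product $(I_{2ml}+(1-u)tP)(I_{2ml}-(1-u)tJ)=I_{2ml}+(1-u)t(P-J)-(1-u)^2t^2PJ$ is lower block-triangular with diagonal blocks $(1-(1-u)^2t^2)I_l$ for $1\le\alpha\le m$ and $I_l$ for $m+1\le\alpha\le 2m$, so the determinant is read off the diagonal. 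Your first route exploits the same pairing $e_{i+m}=e_i^{-1}$ but organizes it as a conjugation by a permutation matrix splitting $J$ into $m$ antidiagonal $2l\times 2l$ blocks, each handled by the Schur complement; this is arguably more transparent about where the exponent $ml$ comes from, at the cost of having to set up the reordering. Your second route, using $J^2=I_{2ml}$ from Lemma \ref{lem:411} (which is indeed proved before this lemma, so there is no circularity) together with $\mathrm{tr}(J)=0$ to conclude that the $\pm1$ eigenspaces each have dimension $ml$, is the shortest of the three and isolates exactly what is needed: any trace-zero involution of size $2ml$ has this characteristic polynomial. One small remark: the fixed-point-freeness of the pairing $e_\alpha\mapsto e_\alpha^{-1}$ is automatic from the construction of the symmetric digraph $R(B_H)$ and the labelling of Remark \ref{rem:24}, which lists $2m$ distinct directed edges with $e_{i+m}=e_i^{-1}$; it does not actually require invoking the no-loops hypothesis on $H$, though your caution there does no harm.
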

\begin{proof}
    We adopt the indexing of directed edges introduced in Remark \ref{rem:24},
    We first define a $2m \times 2m$ block matrix $P=(p_{\alpha \beta })$, where 
    each block $p_{\alpha \beta }$ is an $l \times l$ matrix.
    The entries of $P$ are given as follows:
    \begin{equation*}
    p_{\alpha \beta }=
    \begin{cases}
        \rho(\phi(e_\alpha )), & 
        \text{if}\ \alpha <\beta , \   e_\alpha =e_\beta ^{-1},\\
        0_l & \text{otherwise}.
    \end{cases}
    \end{equation*}
    Using the index of Remark \ref{rem:24},
    the entries can be written as follows:
    \begin{equation*}
    p_{\alpha \beta }=
    \begin{cases}
        \rho(\phi(e_\alpha )), & \text{if}\ 
        1 \le \alpha  \le m,\  \beta =\alpha +m, \\
        0_l & \text{otherwise}.
    \end{cases}
    \end{equation*}
    The matrix $P$ is a strictly upper triangular matrix, which implies that 
    $\det (I_{2ml}+(1-u)tP)=1$.
    
    We begin with the difference $P-J$.
    By the definitions of $P $ and $J$,
    both matrices have the same entry, $\rho(\phi(e_\alpha ))$,
    in the $(\alpha ,\alpha +m)$-th entry for $\alpha \in \{1,\dots,m\}$.
    Thus, these common entries cancel each other out.
    The $(\alpha +m,\alpha )$-th entry of $J$ 
    for $\alpha \in \{1,\dots,m\}$ remains,
    as the corresponding entries in $P$ are zero.
    The result is as follows:
    \begin{equation}\label{eq:p-j}
        (P-J)_{\alpha \beta }=
        \begin{cases}
            -\rho(\phi(e_\alpha )) & 
            \text{if}\ m+1 \le \alpha  \le 2m, \beta =\alpha -m,\\
            0_l & \text{otherwise}.
        \end{cases}
    \end{equation}
    Thus, $P-J$ is a strictly lower triangular matrix.
    
    Next, we compute the product $PJ$.
    The $(\alpha ,\beta )$-th entry of this product is given by the sum
    $\sum_{s=1}^{2m}p_{\alpha s}j_{s\beta }$. By the definitions of $P$ and $J$,
    each entry $p_{\alpha s}j_{s\beta }\neq 0$ if and only if
    $e_\alpha =e_s^{-1}=e_\beta $ for $\alpha  \in \{ 1,\dots, m\}$ and $s=\alpha +m$.
    Since the inverse of any directed edge is unique, this condition 
    implies that 
    $e_\alpha =e_s^{-1}$ for $\alpha  \in \{ 1,\dots, m\}, \ s=\alpha +m$ and $\alpha=\beta $. 
    Then for all $\alpha  \in \{1,\dots, m\}$,
    \begin{equation*}
    \sum_{s=1}^{2m}p_{\alpha s}j_{s\beta }
    =\delta_{\alpha \beta }\rho(\phi(e_\alpha ))\rho(\phi(e_s))
    =\delta_{\alpha \beta }\rho(\phi(e_\alpha ))\rho(\phi(e_\alpha ^{-1}))
    =\delta_{\alpha \beta }I_l.
    \end{equation*}
    Thus
    \begin{equation}\label{eq:pj}
        (PJ)_{\alpha \beta }=
        \begin{cases}
            \delta_{\alpha \beta }I_l, &
            \text{if}\ \alpha  \in \{1,\dots, m\},\\
            0_l, & \text{otherwise},
        \end{cases}
    \end{equation}
    which shows that the matrix $PJ$ is a diagonal matrix.

    We calculate the determinant of $(I_{2ml}-(1-u)tJ)$
    as follows:
    \begin{align*}
        1 \cdot  \det (I_{2ml}-(1-u)tJ) 
        &=\det (I_{2ml}+(1-u)tP)\cdot \det (I_{2ml}-(1-u)tJ)\\
        &=\det(I_{2ml}+(1-u)t(P-J)-(1-u)^2t^2 PJ)
    \end{align*}
    Let $R=I_{2ml}+(1-u)t(P-J)-(1-u)^2t^2 PJ$. Then
    by equations \eqref{eq:p-j} and \eqref{eq:pj},
    $R$ is a lower triangular matrix.
    Thus the determinant of $R$ 
    is the product of its diagonal entries.
    The diagonal entries of $R$ are as follows:
    \begin{equation*}
        r_{\alpha \alpha }
        =\begin{cases}
            (1-(1-u)^2t^2)I_l, & 
            \text{if}\ 1 \le \alpha  \le m,\\
            I_l, &
            \text{if}\ m+1 \le \alpha  \le 2m, \\
        \end{cases}
    \end{equation*}
    Thus we obtain the following formula:
    \begin{equation*}
        \det (I_{2ml}-(1-u)tJ) =(1-(1-u)^2t^2)^{ml}.
    \end{equation*}
\end{proof}

\begin{lem}\label{lem:change}
    Let $A\in M_s(\ZZ)$ and $B \in M_t(\ZZ)$.
    Then, there exists a matrix $\mathfrak{P}_{s,t} \in GL_{st}(\ZZ)$
    such that $\mathfrak{P}_{st}(A \otimes B)\mathfrak{P}_{st}^{-1}=B \otimes A$.
    Here $A \otimes B$ is the Kronecker product in Definition \ref{def:kronecker}.
\end{lem}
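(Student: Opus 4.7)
The plan is to exhibit $\mathfrak{P}_{s,t}$ explicitly as the ``commutation'' (or perfect shuffle) permutation that swaps the two tensor factors, and then verify the intertwining property by a direct computation on basis elements.

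First I would identify $\mathbb{Z}^{st}$ with $\mathbb{Z}^{s} \otimes \mathbb{Z}^{t}$ via the basis $\{e_i \otimes f_j\}_{1\le i \le s,\ 1 \le j \le t}$, ordered lexicographically in $(i,j)$, so that the matrix of $A \otimes B$ in this basis is exactly the Kronecker product of Definition \ref{def:kronecker}. Next, I would define the linear map $\mathfrak{P}_{s,t}: \mathbb{Z}^{s} \otimes \mathbb{Z}^{t} \to \mathbb{Z}^{t} \otimes \mathbb{Z}^{s}$ by $\mathfrak{P}_{s,t}(e_i \otimes f_j) = f_j \otimes e_i$, where the target is identified with $\mathbb{Z}^{st}$ via the lexicographic ordering on $(j,i)$. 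Since $\mathfrak{P}_{s,t}$ merely permutes basis vectors, its matrix has exactly one $1$ in each row and column and all other entries $0$; hence it is a permutation matrix and, in particular, an element of $GL_{st}(\mathbb{Z})$ with $\mathfrak{P}_{s,t}^{-1} = {}^t\mathfrak{P}_{s,t}$.

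The intertwining identity then follows from a one-line computation on basis elements. For any $i,j$ one has
\begin{align*}
\mathfrak{P}_{s,t}(A \otimes B)(e_i \otimes f_j)
&= \mathfrak{P}_{s,t}\bigl((A e_i) \otimes (B f_j)\bigr) \\
&= (B f_j) \otimes (A e_i) \\
&= (B \otimes A)(f_j \otimes e_i) \\
&= (B \otimes A)\, \mathfrak{P}_{s,t}(e_i \otimes f_j),
\end{align*}
where in the second equality I use bilinearity of the tensor swap together with the definition of $\mathfrak{P}_{s,t}$. Since $\{e_i \otimes f_j\}$ is a basis, this yields $\mathfrak{P}_{s,t}(A \otimes B) = (B \otimes A)\mathfrak{P}_{s,t}$, and rearranging gives the desired equation $\mathfrak{P}_{s,t}(A \otimes B)\mathfrak{P}_{s,t}^{-1} = B \otimes A$.

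The statement has essentially no obstacle beyond bookkeeping: the only subtlety is making sure the basis orderings on $\mathbb{Z}^{st}$ chosen for $A \otimes B$ and for $B \otimes A$ agree with the block structure of Definition \ref{def:kronecker}. Once the lexicographic conventions are fixed consistently, the permutation $(i,j) \mapsto (j,i)$ has an entirely explicit matrix description and the verification is immediate.
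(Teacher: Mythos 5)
Your proposal is correct and follows essentially the same route as the paper: both construct the commutation (tensor-swap) permutation matrix $\mathfrak{P}_{s,t}$ sending $e_i \otimes f_j$ to $f_j \otimes e_i$ under the lexicographic basis orderings, observe it lies in $GL_{st}(\ZZ)$, and verify the intertwining relation; your explicit basis-element computation is just a slightly more detailed version of the paper's identity $f_{B \otimes A} = f_{\mathfrak{P}_{st}} f_{A \otimes B} f_{\mathfrak{P}_{st}}^{-1}$.
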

\begin{proof}
The matrices $A \in M_s(\ZZ)$ and $B \in M_t(\ZZ)$ represent linear maps $f_A$ 
and $f_B$ acting on vector spaces $V$ and $W$ with 
respect to the standard bases 
$\{e_i\}_{1 \le i \le s}$ and $\{f_j\}_{1 \le j \le t}$, respectively.
The Kronecker product $A \otimes B$ is the transformation matrix 
of the linear map $f_A \otimes f_B: V \otimes W \to V \otimes W$ 
with respect to the standard basis $B_1$, which is ordered as:
$B_1=(e_1 \otimes f_1,e_1\otimes f_2,\dots, e_1 \otimes f_t,\dots,e_s \otimes f_t )$.
Similarly,
the matrix $B \otimes A$ is the transformation matrix 
of the linear map $f_B \otimes f_A: W \otimes V \to W \otimes V$ 
with respect to the standard basis $B_2$, which is ordered as:
$B_2=(f_1 \otimes e_1,f_2 \otimes 
e_1,\dots,f_t \otimes e_1,\dots, f_t \otimes e_s )$.
Let $f_{\mathfrak{P}_{st}}:V \otimes W \to W \otimes V$ be the linear map 
that swaps the tensor factors $f_{\mathfrak{P}_{st}}(e_i \otimes f_j)=f_j \otimes e_i$.
We define the matrix $\mathfrak{P}_{st}$ as the transformation matrix 
with respect to $f_{\mathfrak{P}_{st}}$.
Since $\mathfrak{P}_{st}$ is a permutation matrix, it is invertible,
and thus $\mathfrak{P}_{st} \in GL_{st}(\ZZ)$.
By the definition of $\mathfrak{P}_{st}$, we have the equation
$f_{B \otimes A}=f_{\mathfrak{P}_{st}}f_{A \otimes B} 
f_{\mathfrak{P}_{st}}^{-1}$.
Therefore, $B \otimes A=\mathfrak{P}_{st}(A \otimes B)\mathfrak{P}_{st}^{-1}$ holds.

\end{proof}

\begin{lem}\label{lem:413}
Under the same assumption as in Theorem \ref{thm11}, the following holds:

    \begin{align*}
         &\det
        \left(I_{nl}-t\sum_{g \in \Gamma}A(B_H)_g \otimes \rho(g)
            +(1-u)t^2 (D(B_H) \otimes I_l-(1-u)I_{nl}) \right)\\
        &= \det
        \left(I_{nl}-t\sum_{g \in \Gamma}\rho(g) \otimes A(B_H)_g
            +(1-u)t^2 (I_l \otimes D(B_H)-(1-u)I_{nl}) \right).
    \end{align*}
\end{lem}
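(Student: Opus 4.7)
The plan is to apply Lemma \ref{lem:change} uniformly to every term inside the determinant and then use the fact that the determinant is invariant under conjugation by an invertible matrix.

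First, by Lemma \ref{lem:change} applied with $A(B_H)_g \in M_n(\ZZ)$ and $\rho(g) \in M_l(\CC)$, there exists a permutation matrix $\mathfrak{P}_{n,l} \in GL_{nl}(\ZZ)$, depending only on the two dimensions $n$ and $l$, such that
\begin{equation*}
\mathfrak{P}_{n,l}(A(B_H)_g \otimes \rho(g))\mathfrak{P}_{n,l}^{-1} = \rho(g) \otimes A(B_H)_g
\end{equation*}
for every $g \in \Gamma$. The crucial observation is that one and the same $\mathfrak{P}_{n,l}$ works for every $g$ simultaneously, because its construction is a universal swap of tensor factors depending only on the dimensions. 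Summing over $g$ yields
\begin{equation*}
\mathfrak{P}_{n,l}\left(\sum_{g \in \Gamma} A(B_H)_g \otimes \rho(g)\right)\mathfrak{P}_{n,l}^{-1} = \sum_{g \in \Gamma}\rho(g) \otimes A(B_H)_g.
\end{equation*}

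Next, applying Lemma \ref{lem:change} with $A = D(B_H)$ and $B = I_l$ gives
\begin{equation*}
\mathfrak{P}_{n,l}(D(B_H) \otimes I_l)\mathfrak{P}_{n,l}^{-1} = I_l \otimes D(B_H),
\end{equation*}
while $I_{nl}$ is trivially invariant under conjugation. Combining these, conjugation by $\mathfrak{P}_{n,l}$ sends the matrix inside the left hand side determinant to the matrix inside the right hand side determinant. Since $\det(\mathfrak{P}_{n,l}M\mathfrak{P}_{n,l}^{-1})=\det M$ for any square matrix $M$, the two determinants coincide, which is the desired equality.

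There is no substantial obstacle here, since the essential content has already been established in Lemma \ref{lem:change}. The only point worth emphasizing is that the same permutation matrix $\mathfrak{P}_{n,l}$ intertwines every summand of $\sum_{g \in \Gamma} A(B_H)_g \otimes \rho(g)$ with the corresponding summand of $\sum_{g \in \Gamma} \rho(g) \otimes A(B_H)_g$, so that one single conjugation transforms the entire expression in one stroke.
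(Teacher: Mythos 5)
Your proposal is correct and follows essentially the same route as the paper: both invoke Lemma \ref{lem:change} to produce the tensor-swap permutation matrix, observe that the single matrix $\mathfrak{P}_{n,l}$ (depending only on the dimensions $n$ and $l$) conjugates every summand and the degree term simultaneously, and conclude by the conjugation-invariance of the determinant. Your explicit emphasis that one fixed $\mathfrak{P}_{n,l}$ works for all $g\in\Gamma$ is a point the paper leaves implicit, but the argument is the same.
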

\begin{proof}
    By Lemma \ref{lem:change}, there exists a permutation matrix $\mathfrak{P}_{nl}$
    such that 
    $\mathfrak{P}_{nl}(A(B_H)_g \otimes \rho(g))\mathfrak{P}_{nl}^{-1}=\rho(g) \otimes A(B_H)_g$
    and 
    $\mathfrak{P}_{nl}(D(B_H) \otimes I_l)\mathfrak{P}_{nl}^{-1}=I_l \otimes D(B_H)$ 
    hold.
    Then
    \begin{align*}
        &\det
        \left(I_{nl}-t\sum_{g \in \Gamma}A(B_H)_g \otimes \rho(g)
            +(1-u)t^2 (D(B_H) \otimes I_l-(1-u)I_{nl}) \right)\\
        &=\det(\mathfrak{P}_{nl})
            \det
        \left(I_{nl}-t\sum_{g \in \Gamma}A(B_H)_g \otimes \rho(g)
            +(1-u)t^2 (D(B_H) \otimes I_l-(1-u)I_{nl}) \right)
            \det(\mathfrak{P}_{nl}^{-1})\\
        &=\det \left( 
        \mathfrak{P}_{nl} \left(I_{nl}-t\sum_{g \in \Gamma}A(B_H)_g \otimes \rho(g)
            +(1-u)t^2 (D(B_H) \otimes I_l-(1-u)I_{nl}) \right)\mathfrak{P}_{nl}^{-1}\right)\\
        &=\det \left(I_{nl}-t\sum_{g \in \Gamma}\rho(g) \otimes A(B_H)_g
            +(1-u)t^2 (I_l \otimes D(B_H)-(1-u)I_{nl}) \right).
    \end{align*}
    This completes the proof.
\end{proof}

    \begin{prop}\label{prop54}

Let us adopt the same assumption as in Theorem \ref{thm11}.
Let $\rho$ be a unitary representation of $\Gamma$ with degree $l$.
Then the following formula holds:
        \begin{align*}
            \zeta(B_H,\rho,\phi,u,t)^{-1}
            &=(1-(1-u)^2t^2)^{(m-n)l}\\
            &\cdot \det \left(I_{nl}-t\sum_{g \in \Gamma}\rho(g) \otimes A(B_H)_g+
            (1-u)t^2\left(I_l \otimes (D(B_H)-(1-u)I_n) \right)\right).
        \end{align*}
    \end{prop}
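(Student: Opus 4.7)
The plan is to start from the Hashimoto-type expression in Proposition~\ref{Prop:hashimoto}, $\zeta(B_H,\rho,\phi,u,t)^{-1}=\det(I_{2ml}-t(B+uJ))$, and reduce the $2ml\times 2ml$ determinant to an $nl\times nl$ one by a Sylvester determinant identity, pulling out the prefactor $(1-(1-u)^2t^2)^{(m-n)l}$ in the process. Using Lemma~\ref{lem:47} to rewrite $B+uJ = K\,{}^tL - (1-u)J$, we get
\[
I_{2ml}-t(B+uJ) \;=\; \bigl(I_{2ml}+(1-u)tJ\bigr)\;-\;tK\,{}^tL.
\]

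Since $J^2=I_{2ml}$ by Lemma~\ref{lem:411}, the matrix $I_{2ml}+(1-u)tJ$ is invertible for small $t$ with inverse $\tfrac{1}{1-(1-u)^2t^2}\bigl(I_{2ml}-(1-u)tJ\bigr)$, and substituting $t\mapsto -t$ in Lemma~\ref{lem:412} gives $\det(I_{2ml}+(1-u)tJ)=(1-(1-u)^2t^2)^{ml}$. Applying Sylvester's identity (equivalently, the two Schur complements of $\bigl(\begin{smallmatrix} I_{2ml}+(1-u)tJ & -tK \\ -{}^tL & I_{nl}\end{smallmatrix}\bigr)$) produces
\[
\det\bigl(I_{2ml}-t(B+uJ)\bigr) \;=\; (1-(1-u)^2t^2)^{ml}\cdot\det\!\Bigl(I_{nl}-\tfrac{t}{1-(1-u)^2t^2}\bigl({}^tLK - (1-u)t\,{}^tLJK\bigr)\Bigr).
\]

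By Lemma~\ref{lem:48}, ${}^tLK=\sum_{g\in\Gamma}A(B_H)_g\otimes\rho(g)$. The identity ${}^tLJK=D(B_H)\otimes I_l$ is an auxiliary claim I would prove by observing from the block definitions that $JK=L$ (which uses only $\phi(e^{-1})=\phi(e)^{-1}$ together with $J^2=I_{2ml}$), whence ${}^tLJK={}^tL\,L$, whose diagonal blocks are $\deg(v_\alpha)I_l$ while all off-diagonal blocks vanish. Pulling the common scalar $(1-(1-u)^2t^2)^{-1}$ out of the $nl$-sized determinant introduces a factor $(1-(1-u)^2t^2)^{-nl}$, so after combining exponents $ml-nl=(m-n)l$ one arrives at
\[
\zeta(B_H,\rho,\phi,u,t)^{-1} \;=\; (1-(1-u)^2t^2)^{(m-n)l}\,\det\!\Bigl(I_{nl}-t\!\sum_{g\in\Gamma}A(B_H)_g\otimes\rho(g) + (1-u)t^2\bigl(D(B_H)\otimes I_l-(1-u)I_{nl}\bigr)\Bigr).
\]
Finally, Lemma~\ref{lem:413} swaps the order of the Kronecker products and yields the form stated in the proposition.

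The main obstacle I expect is the auxiliary identity ${}^tLJK=D(B_H)\otimes I_l$. Although it closely resembles Lemma~\ref{lem:49} (${}^t\bar K K=D(B_H)\otimes I_l$), one cannot substitute that lemma directly, because ${}^tLJ$ and ${}^t\bar K$ differ by a conjugate transpose at the block level and the unitarity of $\rho$ does not reconcile them. The cleanest route is the factorization ${}^tLJK={}^tL\,L$ via $JK=L$, which is a short block-level check (and, incidentally, does not require unitarity, in contrast to Lemma~\ref{lem:49}). All remaining steps are bookkeeping: expanding the inverse via $J^2=I$, invoking Sylvester's identity, and applying the Kronecker-swap Lemma~\ref{lem:413}.
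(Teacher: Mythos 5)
Your proof is correct, and it follows a genuinely different (though closely related) route from the paper. The paper forms the explicit block matrices $\mathbb{X}$ and $\mathbb{Y}$ and equates $\det(\mathbb{X}\mathbb{Y})=\det(\mathbb{Y}\mathbb{X})$, which forces it to invoke the two unitarity-dependent identities ${}^t\overline{K}K=D(B_H)\otimes I_l$ (Lemma \ref{lem:49}) and $K\,{}^t\overline{K}=BJ+I_{2ml}$ (Lemma \ref{lem:410}). You instead write $I_{2ml}-t(B+uJ)=(I_{2ml}+(1-u)tJ)-tK\,{}^tL$ and apply the Weinstein--Aronszajn/Schur-complement identity, which requires only $\det(I_{2ml}+(1-u)tJ)$, the inverse $(I_{2ml}+(1-u)tJ)^{-1}=\tfrac{1}{1-(1-u)^2t^2}(I_{2ml}-(1-u)tJ)$ coming from $J^2=I_{2ml}$, and the two products ${}^tLK$ and ${}^tLJK$. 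Your auxiliary claim $JK=L$ is a correct one-line block check (the unique $s$ with $e_s=e_\alpha^{-1}$ contributes $\rho(\phi(e_\alpha))\rho(\phi(e_\alpha))^{-1}=I_l$ exactly when $o(e_\alpha)=v_\beta$), and ${}^tLL=D(B_H)\otimes I_l$ then follows by counting directed edges with a given origin; the exponent bookkeeping $ml-nl=(m-n)l$ after pulling out the scalar is also right. What your route buys is that it never uses unitarity of $\rho$ --- only the homomorphism property and $\phi(e^{-1})=\phi(e)^{-1}$ --- so you actually prove the determinant formula for an arbitrary representation, whereas the paper's argument, as written, genuinely needs $\rho$ unitary in Lemmas \ref{lem:49} and \ref{lem:410}. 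Both arguments are instances of the standard Bass-type reduction, and both correctly finish with Lemma \ref{lem:413} to swap the Kronecker factors.
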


\begin{proof}
    By Proposition \ref{Prop:hashimoto},
    it suffices to show that the right hand side
    is equal to $\det \left(I-(B+uJ)t\right)$.\\
    By Lemma \ref{lem:47}, Lemma \ref{lem:48},
    Lemma \ref{lem:49}, Lemma \ref{lem:410} and Lemma \ref{lem:411}, we have
    \begin{align*}
        K\ {}^tL&=B+J,\quad
        {}^tLK=\sum_{g \in \Gamma}A(B_H)_g \otimes \rho(g),\quad
        {}^t\overline{K}K =D(B_H) \otimes I_l,\quad
        K\ {}^t\overline{K}=BJ+I_{2ml},\quad
        J^2=I_{2ml}.
        \\
        \intertext{We now define the following matrices:}
        \mathbb{X}&=
        \begin{pmatrix}
            (1-(1-u)^2t^2)I_{nl} & -{}^tL+(1-u)t{}^t\overline{K} \\
            0 & I_{2ml} 
        \end{pmatrix},\quad 
        \mathbb{Y}=
        \begin{pmatrix}
            I_{nl} & {}^tL-(1-u)t{}^t\overline{K} \\
            tK & (1-(1-u)^2t^2)I_{2ml}
        \end{pmatrix}.
        \intertext{Their products are:}
        \mathbb{X}\mathbb{Y}
        &=\begin{pmatrix}
            (1-(1-u)^2t^2)I_{nl}-t\sum_{g \in \Gamma}A(B_H)_g \otimes \rho(g)
            +(1-u)t^2 D(B_H) \otimes I_l & 0\\
            tK & (1-(1-u)^2t^2)I_{2ml}
        \end{pmatrix},\\
        \mathbb{Y}\mathbb{X}
        &=\begin{pmatrix}
            (1-(1-u)^2t^2)I_{nl} & 0\\
            t(1-(1-u)^2t^2)K & (I_{2ml}-t(B+uJ))(I_{2ml}-(1-u)tJ)
        \end{pmatrix}.\\
        \end{align*}
        
        The determinants of $\mathbb{X}\mathbb{Y}$ and $\mathbb{Y}\mathbb{X}$ 
        are equal. Therefore, we have the following:
        \begin{align}
        &(1-(1-u)^2t^2)^{2ml}
        \det \left(I_{nl}-t\sum_{g \in \Gamma}A(B_H)_g \otimes \rho(g)
            +(1-u)t^2 (D(B_H) \otimes I_l-(1-u)I_{nl}) \right) \label{eq:det2}  \\ 
        &=(1-(1-u)^2t^2)^{nl}
        \det \left(I_{2ml}-t(B+uJ) \right)
        \det \left(I_{2ml}-(1-u)tJ\right).\notag
        \end{align}
        By Lemma \ref{lem:412}, $\det (I_{2ml}-(1-u)tJ)=(1-(1-u)^2t^2)^{ml}$.
        Thus, we now obtain $\det  (I_{2ml}-t(B+uJ))$ by 
        substituting Lemma \ref{lem:412} 
        into equation \eqref{eq:det2}.
        
        \begin{align*}
        &\det (I_{2ml}-t(B+uJ))\\
        &=(1-(1-u)^2t^2)^{(m-n)l} \det
        \left(I_{nl}-t\sum_{g \in \Gamma}A(B_H)_g \otimes \rho(g)
            +(1-u)t^2 (D(B_H) \otimes I_l-(1-u)I_{nl}) \right)\\
            \intertext{By Lemma \ref{lem:413}, the  Kronecker product factors are interchanged:}
        &=(1-(1-u)^2t^2)^{(m-n)l} \det
        \left(I_{nl}-t\sum_{g \in \Gamma}\rho(g) \otimes A(B_H)_g
            +(1-u)t^2 (I_l \otimes D(B_H)-(1-u)I_{nl}) \right)\\
        &=(1-(1-u)^2t^2)^{(m-n)l} \det
        \left(I_{nl}-t\sum_{g \in \Gamma}\rho(g) \otimes A(B_H)_g
            +(1-u)t^2 I_l \otimes ( D(B_H)-(1-u)I_{n}) \right)
    \end{align*}    
    This concludes the proof.
\end{proof}

\begin{rem}\label{rem:412}

Proposition \ref{prop54} also applies when $\rho=\mathbb{P}$,
because the (left) permutation representation is unitary.
\end{rem}

The following Theorem holds.

    \begin{thm}[Theorem \ref{thm12}]\label{thm53}
Let us adopt the same assumption as Theorem \ref{thm11},
and let us assume that for any $i$ with $m_i>0$, $\rho_i$ is unitary.
Then the following identity holds:
        \begin{equation*}
            \zeta(\Bar{H},u,t)
            =\prod_{i=1}^s\zeta(H,\rho_i,\phi,u,t)^{m_i}.
        \end{equation*}
    \end{thm}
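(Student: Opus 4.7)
The plan is to combine Theorem~\ref{thm41} with Proposition~\ref{prop54} and check that the factors match term by term. The strategy is to rewrite every $M_i$ appearing in Theorem~\ref{thm41} as an inverse Bartholdi $L$-function, then verify that the leftover powers of $(1-(1-u)^2 t)$ cancel exactly.

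First I would apply Proposition~\ref{prop54} to each unitary $\rho_i$ (which covers every index $i$ with $m_i>0$ by assumption). Substituting $\sqrt{t}$ for $t$ in that proposition and using $\zeta(H,\rho_i,\phi,u,t)=\zeta(B_H,\rho_i,\phi,u,\sqrt{t})$, I obtain
\begin{equation*}
\zeta(H,\rho_i,\phi,u,t)^{-1}
= (1-(1-u)^2 t)^{(m-n)f_i}\, \widetilde{M}_i,
\end{equation*}
where $\widetilde{M}_i=\det\!\bigl(I_{f_in}-\sqrt{t}\sum_{g\in\Gamma}\rho_i(g)\otimes A(B_H)_g+(1-u)t\,I_{f_i}\otimes(D(B_H)-(1-u)I_n)\bigr)$. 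Since $f_i\circ X=\oplus_{j=1}^{f_i}X=I_{f_i}\otimes X$ by definition of the shorthand $f_i\circ$, this $\widetilde{M}_i$ coincides with the matrix $M_i$ appearing in Theorem~\ref{thm41}. Hence
\begin{equation*}
M_i=(1-(1-u)^2 t)^{-(m-n)f_i}\,\zeta(H,\rho_i,\phi,u,t)^{-1}.
\end{equation*}

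Next I would handle the trivial representation $\rho_1=I_1$ separately. By the same substitution (or directly by Theorem~\ref{thm213}), $\zeta(H,\rho_1,\phi,u,t)=\zeta(H,u,t)$, so the factor $\zeta(H,u,t)^{-m_1}$ in Theorem~\ref{thm41} is exactly $\zeta(H,\rho_1,\phi,u,t)^{-m_1}$. Substituting all these identifications into Theorem~\ref{thm41} yields
\begin{equation*}
\zeta(\Bar{H},u,t)^{-1}
= (1-(1-u)^2 t)^{(k-m_1)(m-n)-\sum_{i=2}^{s}m_i f_i(m-n)}
\prod_{i=1}^{s}\zeta(H,\rho_i,\phi,u,t)^{-m_i}.
\end{equation*}

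The remaining step is to show the exponent of $(1-(1-u)^2 t)$ vanishes. This follows from the standard decomposition of the permutation representation $\mathbb{P}$ of degree $k$: $k=\sum_{i=1}^{s}m_i f_i$, and since $f_1=1$, $\sum_{i=2}^{s}m_i f_i=k-m_1$. Then $(k-m_1)(m-n)-(k-m_1)(m-n)=0$, so the prefactor is $1$ and the identity collapses to the desired product formula. The only subtlety I anticipate is careful bookkeeping of the $\sqrt{t}$ substitution and the identification $f_i\circ X=I_{f_i}\otimes X$; once those are made explicit, the proof is essentially a direct comparison.
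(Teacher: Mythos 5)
Your proposal is correct and follows essentially the same route as the paper: apply Proposition~\ref{prop54} (with the $t\mapsto\sqrt{t}$ substitution) to rewrite each $M_i$ as $(1-(1-u)^2t)^{-(m-n)f_i}\zeta(H,\rho_i,\phi,u,t)^{-1}$, identify $\zeta(H,u,t)$ with the $L$-function of the trivial representation, and cancel the prefactor via $k=\sum_i m_if_i$ and $f_1=1$. Your explicit check that $f_i\circ X=I_{f_i}\otimes X$ is a detail the paper leaves implicit, but the argument is the same.
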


\begin{proof}

By Theorem \ref{thm41},
we have the following equation:
        \begin{equation*}
            \zeta(\Bar{H},u,t)^{-1}=
            \zeta(H,u,t)^{-m_1}
            (1-(1-u)^2t)^{(k-m_1)(m-n)}
            \prod_{i=2}^s M_i^{m_i},
        \end{equation*}
        where 
        \begin{equation*}
         M_i=\det \left(I_{f_in}-\sqrt{t}\sum_{g \in \Gamma}(\rho_i(g) \otimes A(B_H)_g)+
         (1-u)tf_i \circ \left(D(B_H)-(1-u)I_n\right)\right).
        \end{equation*}
Since $\rho_i$ is unitary for any $i$ with $m_i>0$,
applying Proposition \ref{prop54}, we get 
\begin{equation*}
    M_i=\zeta(B_H,\rho_i,\phi,u,\sqrt{t})^{-1}(1-(1-u)^2t)^{-(m-n)f_i}.
\end{equation*}
Comparing the sizes of the
matrices on both sides and using Lemma \ref{lem34}, we obtain the equality  
$k=\sum_{i=1}^s m_if_i$.
Since $\rho_1$ is the trivial representation, its degree $f_1=1$.
Also, by Definition \ref{def:L-funct.}, we have the identity 
$\zeta(B_H,\rho_1,\phi,u,t)=\zeta(B_H,u,t)$ .
Thus 
\begin{align*}
    \zeta(\Bar{H},u,t)^{-1}
    =&\zeta(H,u,t)^{-m_1}
            (1-(1-u)^2t)^{(k-m_1)(m-n)} \\
            &\cdot
            \prod_{i=2}^s 
            \left(\zeta(B_H,\rho_i,\phi,u,\sqrt{t})^{-1}(1-(1-u)^2t)^{-(m-n)f_i} \right)^{m_i}\\
    =&\zeta(B_H,\rho_1,\phi,u,\sqrt{t})^{-m_1}
    (1-(1-u)^2t)^{(f_2m_2+\cdots +f_sm_s)(m-n)} \\
    &\cdot 
    \prod_{i=2}^s 
            \zeta(B_H,\rho_i,\phi,u,\sqrt{t})^{-m_i}
            (1-(1-u)^2t)^{-(m-n)f_im_i}\\
    =&\prod_{i=1}^s \zeta(B_H,\rho_i,\phi,u,\sqrt{t})^{-m_i}\\
    =&\prod_{i=1}^s \zeta(H,\rho_i,\phi,u,t)^{-m_i}
\end{align*}
This completes the proof.
\end{proof}

Finally, we illustrate an example of Theorem \ref{thm53}.

\begin{ex}
    Let $H=(V(H),E(H))$ be a hypergraph with
    the vertex set $V(H)=\{v_1,v_2,v_3\}$ and
    the edge set $E(H)=\{e_1,e_2,e_3\}$, 
    where $e_1=\{v_1,v_2\},e_2=\{v_2,v_3\},e_3=\{v_1,v_2,v_3\}$.
    Let the hypergraph covering $\Bar{H}$ be defined 
    by the permutation voltage assignment $\phi$:
    $\phi((v_1,e_1))=\phi((v_1,e_3))=(12)$,
    and $\phi((v,e))=\phi((e,v))=1$ for all other
    directed edges $(v,e),(e,v) \in E(R(B_H))$.
    Then $\Gamma=S_2$, 
    and $\Bar{H}$ is the $2$-fold hypergraph covering of $H$ 
    with $n=\#V(B_{\Bar{H}})=12 
     $ and $m=\#E(V_{\Bar{H}})=14$.
    By Theorem \ref{thm213}, the reciprocal of the Bartholdi zeta function is 
    given as follows:
    \begin{equation*}
        \zeta(\Bar{H},u,t)^{-1}
        =\zeta(B_{\Bar{H}},u,\sqrt{t})^{-1}
        =(1-(1-u)^2t)^{2}\det (I_{12}-\sqrt{t}A(B_{\Bar{H}})+(1-u)t(D(B_{\Bar{H}})-(1-u)I_{12})),
    \end{equation*}
    where 
\begin{align*}
A(B_{\Bar{H}})&=
\left(
\begin{array}{cccccc|cccccc}
0 & 0 & 0 & 0 & 0 & 0 & 0 & 0 & 1 & 1 & 0 & 0 \\ 
0 & 0 & 0 & 0 & 0 & 0 & 1 & 1 & 1 & 0 & 0 & 0 \\
0 & 0 & 0 & 0 & 0 & 0 & 0 & 1 & 1 & 0 & 0 & 0 \\ 
0 & 0 & 0 & 0 & 0 & 0 & 1 & 0 & 0 & 0 & 0 & 1 \\
0 & 0 & 0 & 0 & 0 & 0 & 0 & 0 & 0 & 1 & 1 & 1 \\
0 & 0 & 0 & 0 & 0 & 0 & 0 & 0 & 0 & 0 & 1 & 1 \\ \hline
0 & 1 & 0 & 1 & 0 & 0 & 0 & 0 & 0 & 0 & 0 & 0 \\ 
0 & 1 & 1 & 0 & 0 & 0 & 0 & 0 & 0 & 0 & 0 & 0 \\ 
1 & 1 & 1 & 0 & 0 & 0 & 0 & 0 & 0 & 0 & 0 & 0 \\ 
1 & 0 & 0 & 0 & 1 & 0 & 0 & 0 & 0 & 0 & 0 & 0 \\
0 & 0 & 0 & 0 & 1 & 1 & 0 & 0 & 0 & 0 & 0 & 0 \\
0 & 0 & 0 & 1 & 1 & 1 & 0 & 0 & 0 & 0 & 0 & 0 \\ 
\end{array}
\right), \\
D(B_{\Bar{H}})&=
\mathrm{diag}(2,3,2,2,3,2,2,2,3,2,2,3).
\end{align*}
We set the basis of above two matrices as 
$\left(v_1^{(1)},v_2^{(1)},v_3^{(1)},v_1^{(2)},v_2^{(2)},v_3^{(2)}
,e_1^{(1)},e_2^{(1)},e_3^{(1)},e_1^{(2)},e_2^{(2)},e_3^{(2)}\right)$.

The group $\Gamma=S_2$ has two irreducible representations:
the trivial representation $\rho_1$ and the sign representation 
$\rho_2$. The permutation representation $\mathbb{P}$ 
is conjugate to the direct sum $\rho_1 \oplus \rho_2$,
where both irreducible representations have multiplicity $m_i=1$ and 
degree $f_i=\deg \rho_i=1$.
The $L$-functions are expressed as follows:
\begin{align*}
    \zeta(H,\rho_1,\phi,u,t)^{-1}
    &=\zeta(B_H,\rho_1,\phi,u,\sqrt{t})^{-1}\\
    &=(1-(1-u)^2t)\det (I_{6}-\sqrt{t}A(B_{H})+(1-u)tD(B_H)-(1-u)I_{6}))^{-1},\\
        \zeta(H,\rho_2,\phi,u,t)^{-1}
    &=\zeta(B_H,\rho_2,\phi,u,\sqrt{t})^{-1}\\
    &=(1-(1-u)^2t)\det 
    \left(I_{6}-\sqrt{t}\sum_{g \in S_2}\rho_2(g) \otimes A(B_{H})_g+(1-u)tD(B_H)-(1-u)I_{6})\right),
\end{align*}
where 
\begin{align*}
A(B_{H})&=
\left(
\begin{array}{ccc|ccc}
0 & 0 & 0 & 1 & 0 & 1  \\ 
0 & 0 & 0 & 1 & 1 & 1  \\
0 & 0 & 0 & 0 & 1 & 1  \\ \hline
1 & 1 & 0 & 0 & 0 & 0  \\
0 & 1 & 1 & 0 & 0 & 0  \\
1 & 1 & 1 & 0 & 0 & 0  \\ 
\end{array}
\right), \\
\sum_{g \in S_2}\rho_2(g) \otimes A(B_{H})_g
&=
\left(
\begin{array}{ccc|ccc}
0 & 0 & 0 & -1 & 0 & 1  \\ 
0 & 0 & 0 & 1 & 1 & 1  \\
0 & 0 & 0 & 0 & 1 & 1  \\ \hline
-1 & 1 & 0 & 0 & 0 & 0  \\
0 & 1 & 1 & 0 & 0 & 0  \\
1 & 1 & 1 & 0 & 0 & 0  \\ 
\end{array}
\right), \\
D(B_{H})&=
\mathrm{diag}(2,3,2,2,2,3).
\end{align*}

Thus
\begin{align*}
\zeta(\Bar{H},u,t)^{-1}
&=(1-(1-u)^2t)^2(ut - t - 1)(ut - t + 1)(u^2t^2 - t^2 - t - 1)(u^2t^2 - t^2 + t - 1) \\
& \quad \times (u^3t^3 + 2u^2t^3 - u^2t^2 - ut^3 - ut^2 - 2t^3 - ut + t^2 - t + 1) \\
& \quad \times (u^3t^3 + 2u^2t^3 + u^2t^2 - ut^3 + ut^2 - 2t^3 - ut - t^2 - t - 1) \\
& \quad \times (u^6t^6 + u^5t^6 - 4u^4t^6 - u^4t^5 - 2u^3t^6 - 3u^4t^4 + 5u^2t^6 - 2u^3t^4 + 2u^2t^5 + ut^6 \\
& \qquad + 5u^2t^4 - 2t^6 + 2u^2t^3 + ut^4 - t^5 + 3u^2t^2 - t^4 + ut^2 - t^3 - t^2 - t - 1) \\
& \quad \times (u^6t^6 + u^5t^6 - 4u^4t^6 + u^4t^5 - 2u^3t^6 - 3u^4t^4 + 5u^2t^6 - 2u^3t^4 - 2u^2t^5 + ut^6 \\
& \qquad + 5u^2t^4 - 2t^6 - 2u^2t^3 + ut^4 + t^5 + 3u^2t^2 - t^4 + ut^2 + t^3 - t^2 + t - 1),\\
\zeta(H,\rho_1,\phi,u,t)^{-1}
&=(1-(1-u)^2t)(ut - t - 1)(ut - t + 1) \\
& \quad \times (u^2t^2 - t^2 - t - 1)(u^2t^2 - t^2 + t - 1) \\
& \quad \times (u^3t^3 + 2u^2t^3 - u^2t^2 - ut^3 - ut^2 - 2t^3 - ut + t^2 - t + 1) \\
& \quad \times (u^3t^3 + 2u^2t^3 + u^2t^2 - ut^3 + ut^2 - 2t^3 - ut - t^2 - t - 1),
\end{align*}

\begin{align*}
\zeta(H,\rho_2,\phi,u,t)^{-1}
&=(1-(1-u)^2t) \\
& \quad \times (u^6t^6 + u^5t^6 - 4u^4t^6 - u^4t^5 - 2u^3t^6 - 3u^4t^4 + 5u^2t^6 - 2u^3t^4 + 2u^2t^5 + ut^6 \\
& \quad + 5u^2t^4 - 2t^6 + 2u^2t^3 + ut^4 - t^5 + 3u^2t^2 - t^4 + ut^2 - t^3 - t^2 - t - 1) \\
& \quad \times (u^6t^6 + u^5t^6 - 4u^4t^6 + u^4t^5 - 2u^3t^6 - 3u^4t^4 + 5u^2t^6 - 2u^3t^4 - 2u^2t^5 + ut^6 \\
& \quad + 5u^2t^4 - 2t^6 - 2u^2t^3 + ut^4 + t^5 + 3u^2t^2 - t^4 + ut^2 + t^3 - t^2 + t - 1).
\end{align*}

By performing the calculation,
we confirm that Theorem \ref{thm53} holds for this example.

\end{ex}

\section*{Acknowledgments}
The author would like to thank deeply Professor Hidekazu Furusho for his
helpful comments and many valuable advices.
The author is indebted to Professor Iwao Sato 
for his comments in revising this paper.

\end{document}